\theoremstyle{definition}
\newtheorem{theorem}{Theorem}[section]
\newtheorem{definition}[theorem]{Definition}
\newtheorem{proposition}[theorem]{Proposition}
\newtheorem{lemma}[theorem]{Lemma}
\newtheorem{corollary}[theorem]{Corollary}
\newtheorem{remark}[theorem]{Remark}
\numberwithin{equation}{section}
\begin{document}
\title[Pointwise Semi-Slant Submanifolds]
{Pointwise Semi-Slant Warped Product Submanifold in a Lorentzian Paracosymplectic Manifold}
\author[S. K. Srivastava and A. Sharma]
{S. K. S\lowercase{rivastava and} A. S\lowercase{harma}}
\address{Department of Mathematics,\\
                   Central University of Himachal Pradesh,\\
                   Dharamshala-176215,\\
                   Himachal Pradesh, INDIA.}
\email{\textcolor[rgb]{0.00,0.00,0.84}{sachink.ddumath@gmail.com}}
\email{\textcolor[rgb]{0.00,0.00,0.84}{anilsharma3091991@gmail.com}}
\thanks {S. K. Srivastava: partially supported through the UGC-BSR Start-Up-Grant vide
their letter no. F.30-29/2014(BSR). A. Sharma: supported by Central University of Himachal Pradesh through the Research fellowship for Ph.D} 
\begin{abstract}
Recently Y{\"u}ksel et. al. \cite{SY} shows that there doesn't exist any proper semi-slant warped product submanifolds in a Lorentzian paracosymplectic manifold. In the present article, we first define and give preparatory lemmas for a new generalize class of semi-slant submanifolds called pointwise semi-slant submanifolds in a Lorentzian paracosymplectic manifold, and then we ensure by presenting some existence results and a non-trivial characterization theorem that there exist a pointwise semi-slant warped product submanifolds in a Lorentzian paracosymplectic manifold counter to warped product semi-slant submanifolds in a Lorentzian paracosymplectic manifold. 
\end{abstract}
\subjclass{53B25, 53B30, 53C12, 53C25, 53D15}
\keywords{Warped product, Slant submanifold, Lorentzian paracontact manifold}
\maketitle
\section{Introduction}
The premise of Lorentzian almost paracontact manifold (introduced by K. Matsumoto \cite{KM}) and warped product submanifolds one of the most effective generalization of pseudo-Riemannian products (initiated by Bishop-O'Neill, B \cite{RB}), has recognized various significant contributions in Lorentzian geometry (or pseudo-Riemannian geometry), and has been successfully employed in different models of space-time, general relativity and black holes (c.f., \cite{Beem1981, BYCBook, Kreitler, OB}). Because of its numerous application to mathematical physics, several researcher found interest and studied the geometry of Lorentzian almost paracontact manifold and warped product submanifold in different settings (see; \cite{PA, BYCsurvey, Duggal, MR, PO, MM}). 

On the other hand, the concept of pointwise slant submanifold was introduced by Chen-Garay \cite{CG} as the natural generalization of slant submanifolds \cite{BYC}. Such submanifolds were earlier studied by Etayo \cite{Etayo} with the name quasi-slant submanifold in almost Hermitian manifolds. Later on, Sahin \cite{Sahin} continued the study of pointwise slant submanifold by presenting a new class of submanifolds called warped product pointwise semi-slant submanifolds in K\"{a}hlerian manifolds. Recently, Park \cite{Park, Park2} and Balgeshir \cite{MBK} extended the notion of pointwise slant, pointwise semi-slant submanifolds and pointwise almost $h$-semi-slant submanifolds along with its warped products aspects in almost contact and quaternionic Hermitian settings.  Motivated by the works of these, in this research we introduced the pointwise semi-slant submanifolds in Lorentzian almost paracontact manifolds which can be considered as the generalization of slant, pointwise slant, semi-invariant, semi-slant submanifolds and investigate the warped aspects for such submanifold. 

The organization of article is as follows. In Sect. \ref{pre}, we recall some basic informations about  Lorentzian paracosymplectic manifold. Subsect. \ref{sub}, \ref{Wpsub} and \ref{psub}, includes some basic formulas, definitions of warped product submanifold, pointwise slant submanifold and some characterization results for such sumanifolds. Sect. \ref{Pss}, deals with the construction of pointwise semi-slant submaniold along with the necessary and sufficient conditions for the distributions allied to the characterization of a pointwise semi-slant submanifold to be involutive and totally geodesic foliation. In Sect. \ref{Psswp}, we first define pointwise semi-slant warped product submanifold $M$, and then give existence and nonexistence results for such warped product submanifolds. We also, obtain a characterization theorem for warped product submanifold of the form $M_{T}\times_{f}M_{\theta}$ with $\xi \in \Gamma(M_{T})$ where, $M_{T}$ and $M_{\theta}$ are invariant and  pointwise proper slant  submanifolds on $M$, respectively and $f$ is a non-constant positive smooth function in a Lorentzian paracosymplectic manifold.

\section{Preliminaries}\label{pre}
Let $\bar{M}^{2m+1}$ be a $2m+1$-dimensional $C^{\infty}$ manifold. Then $\bar{M}^{2m+1}$ is said to have an almost paracontact structure $(\phi ,\xi ,\eta)$, if there exist on $\bar{M}^{2m+1}$ a tensor field $\phi$ of type $(1, 1)$, a smooth vector field $\xi$, and a $1$-form $\eta$  satisfying 
\begin{align}
&\phi ^{2} =I+\eta \otimes \xi ,\quad \eta (\xi )=-1\label{phieta}\\
&\phi\xi = 0, \quad \eta\circ\phi = 0 \quad {\rm and \quad rank}(\phi)=2m.\label{phixi}
\end{align}
where $I$ is the identity transformation. 
If the manifold $\bar{M}^{2m+1}$ has an almost paracontact structure $(\phi ,\xi ,\eta)$ and admits a Lorentzian metric $g$ of type $(0, 2)$ on $\bar{M}^{2m+1}$ such that 
\begin{align}
g(\phi X,\phi Y)=g\left(X,Y\right)+\eta (X)\eta (Y),\label{metric}
\end{align}
where signature of $g$ is necessarily $(1,\,2m)\, or\, (2m,\,1)$ for any vector fields $X$ and $Y$; then the quadruple $(\phi, \xi, \eta, g)$ is called an Lorentzian almost paracontact structure and the manifold $\bar{M}^{2m+1}$ equipped with Lorentzian almost paracontact structure is called an Lorentzian almost paracontact manifold $\bar{M}^{2m+1}(\phi, \xi, \eta, g)$. The Lorentzian metric $g$ makes $\xi$ a timelike unit vector field, that is, $g(\xi ,\xi ) = -1$ (see, \cite{KM, MR}). With respect to $g$, $\eta$ is metrically dual to $\xi$, that is $g(X,\xi)=\eta(X)$. 
In light of Eqs. \eqref{phieta}, \eqref{phixi} and \eqref{metric}, we deduce that
\begin{align}\label{symphi}
g(\phi X,Y)-g(X,\phi Y)=0, 
\end{align}
for any $X,Y \in \Gamma(T\bar{M})$. Here $\Gamma(T\bar{M}^{2m+1})$ is the tangent bundle of $\bar{M}^{2m+1}$.  Finally, the fundamental $2$-form $\Phi$ on $\bar{M}^{2m+1}$ is given by
 \begin{align}\label{PHI} 
g(X,\phi Y)=\Phi(X,Y). 
 \end{align}
 Moreover,
  \begin{align}\label{nabPHI} 
(\bar{\nabla }_{Z}\Phi)(X,Y)=g((\bar{\nabla }_{Z}\phi)X, Y)=(\bar{\nabla }_{Z}\Phi)(Y, X),
 \end{align}
 for any  $X,Y, Z \in \Gamma(T\bar{M})$, $\bar{\nabla }$ is the Levi-Civita connection on $\bar{M}^{2m+1}(\phi, \xi, \eta, g)$.
\begin{definition} 
 A Lorentzian almost paracontact manifold $\bar{M}^{2m+1}(\phi, \xi, \eta, g)$ is called \cite{PO, SY} {\it Lorentzian paracosymplectic} $\bar{M}^{2m+1}$, if the forms $\eta$ and $\Phi$ are parallel with respect to the Levi-Civita connection $\bar{\nabla }$ on  $\bar{M}^{2m+1}(\phi, \xi, \eta, g)$, i.e., 
 \begin{align}\label{pcmdef}
\bar{\nabla } \eta=0\quad{\rm and}\quad \bar{\nabla }\Phi=0
 \end{align}
 for any  $X,Y \in \Gamma(T\bar{M})$.
\end{definition}
From the direct consequence of above definition, Eq. \eqref{phixi} and covariant differentiation formula, we have the following result;
\begin{lemma}\label{lemnabxi}
On a Lorentzian paracosymplectic manifold $\bar{M}^{2m+1}$ such that the structure vector field $\xi \in \Gamma(T\bar{M})$, we have
 \begin{align}
 \bar\nabla_{X}\xi =0,
 \end{align}
 for any $X \in \Gamma(T\bar{M}).$
\end{lemma}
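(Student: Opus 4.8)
The plan is to establish that $g(\bar{\nabla}_X\xi, Y)=0$ for every $Y\in\Gamma(T\bar{M})$ and then conclude $\bar{\nabla}_X\xi=0$ from the non-degeneracy of the Lorentzian metric $g$. First I would use the metric duality between $\xi$ and $\eta$ recorded in the preliminaries, namely $\eta(Y)=g(\xi,Y)$. Since $\bar{\nabla}$ is the Levi-Civita connection it is metric, so for all $X,Y\in\Gamma(T\bar{M})$,
\[
(\bar{\nabla}_X\eta)(Y)=X\bigl(\eta(Y)\bigr)-\eta(\bar{\nabla}_X Y)=X\bigl(g(\xi,Y)\bigr)-g(\xi,\bar{\nabla}_X Y)=g(\bar{\nabla}_X\xi,Y).
\]
By the defining condition \eqref{pcmdef} of a Lorentzian paracosymplectic manifold, $\bar{\nabla}\eta=0$, hence $g(\bar{\nabla}_X\xi,Y)=0$ for all $Y$; as $g$ is non-degenerate this forces $\bar{\nabla}_X\xi=0$.

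An alternative route, closer to the hint in the text, goes through $\phi$. From $\bar{\nabla}\Phi=0$ together with \eqref{nabPHI} one gets $g\bigl((\bar{\nabla}_Z\phi)X,Y\bigr)=0$ for all $X,Y,Z\in\Gamma(T\bar{M})$, so $\bar{\nabla}\phi=0$ by non-degeneracy of $g$. Applying the Leibniz rule (the covariant differentiation formula) to $\phi\xi=0$ from \eqref{phixi} then gives $0=\bar{\nabla}_X(\phi\xi)=(\bar{\nabla}_X\phi)\xi+\phi(\bar{\nabla}_X\xi)=\phi(\bar{\nabla}_X\xi)$, so $\bar{\nabla}_X\xi\in\ker\phi$. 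Since $\mathrm{rank}(\phi)=2m$ on the $(2m+1)$-dimensional $\bar{M}$ and $\phi\xi=0$, we have $\ker\phi=\mathrm{span}\{\xi\}$, whence $\bar{\nabla}_X\xi=\lambda\xi$ for some function $\lambda$; differentiating $g(\xi,\xi)=-1$ and using metricity yields $0=2g(\bar{\nabla}_X\xi,\xi)=2\lambda\,g(\xi,\xi)=-2\lambda$, so $\lambda=0$.

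There is essentially no serious obstacle here; the only points that require care are that $\eta$ is genuinely the metric dual $g(\xi,\cdot)$ (so that the hypothesis $\bar{\nabla}\eta=0$ actually controls $\bar{\nabla}_X\xi$) and that the Lorentzian metric $g$ is non-degenerate, which is precisely what lets us strip off the free argument $Y$ in the identity $g(\bar{\nabla}_X\xi,Y)=0$. I would present the first route as the proof, since it uses only $\bar{\nabla}\eta=0$ and metric compatibility.
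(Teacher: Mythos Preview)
Your proposal is correct. The paper does not give an explicit proof, merely stating that the result follows from the definition, Eq.~\eqref{phixi}, and the covariant differentiation formula; your second route matches this hint exactly, while your first (and preferred) route is even more direct since it bypasses Eq.~\eqref{phixi} entirely by working with $\bar\nabla\eta=0$ and metric duality alone.
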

\subsection{Geometry of submanifolds}\label{sub}
Let $M$ be a real submanifold immersed in a Lorentzian paracosymplectic manifold $\bar{M}^{2m+1}$, we denote by the same symbol $g$ the induced metric on $M$. In this article, we assume that $g$ is non-degenerate (in the sense of \cite{Duggal, OB}). Thus, each tangent space $T_{p}(M)$, for every $p \in M$, is a non-degenerate subspace of $T_{p}(\bar{M)}$ such that $T_{p}(\bar{M)} = T_{p}(M)\oplus T_{p}(M)^{\bot}$, where $T_{p}(M)^{\bot}$  denotes the normal space of $M$. If $\Gamma (TM^{\bot })$ indicate the set of vector fields normal to $M$ and $\Gamma(TM)$ the sections of tangent bundle $TM$ of $M$, then the Gauss-Weingarten formulas are given by, respectively,
\begin{align} 
\bar{\nabla }_{X} Y&=\nabla _{X} Y+h(X,Y), \label{gauss}\\
\bar{\nabla }_{X} \zeta &=-A_{\zeta} X+\nabla _{X}^{\bot }\zeta,\label{weingarten}
\end{align}
for any $X,Y \in \Gamma(TM)$ and $\zeta \in \Gamma(TM^{\bot })$, where $\nabla$ is the induced connection, $\nabla ^{\bot }$ is the normal connection on the normal bundle  $\Gamma(TM^{\bot })$, $h$ is the second fundamental form, and the shape operator $A_{\zeta}$ associated with the normal section $\zeta$ is given in \cite{Chensub} by 
\begin{align}
 \label{shp2form} g\left(A_{\zeta} X,Y\right)=g\left(h(X,Y),\zeta\right).
\end{align}
 If we write, for all $X \in \Gamma(TM)$ and $\zeta \in \Gamma(TM^{\bot })$ that
\begin{align} 
\phi X&=tX+nX,\label{phix}\\
\phi \zeta&=t'\zeta+n'\zeta, \label{phin}
\end{align}
where $tX$ (resp., $nX$) is tangential (resp., normal) part of $\phi X$ and $t'\zeta$ (resp., $n'\zeta$) is tangential (resp., normal) part of $\phi \zeta$.  Then the submanifold $M$ is said to be {\it invariant} if $n$ is identically zero and {\it anti-invariant} if $t$ is identically zero.
From Eqs. $\eqref{symphi}$ and $\eqref{phix}$, we obtain for all $X \in \Gamma(TM)$ that
\begin{align} \label{symxty}
g(X,tY)=g(tX,Y). 
\end{align}
A distribution $D$ on a submanifold $M$ is said to be \cite{BYCBook, Duggal} 
\begin{itemize}
\item[$\bullet$] {\it totally geodesic} if its second fundamental form vanishes identically. 
\item[$\bullet$] {\it umbilical} in the direction of a normal vector field $\zeta$ on $M$, if $A_{\zeta} = \lambda Id$, for certain function $\lambda$ on $M$; here $\zeta$ is called a umbilical section. 
\item[$\bullet$] {\it totally umbilical} if $M$ is umbilical with respect to every (local) normal vector field. 
\item[$\bullet$] {\it involutive} if, for all $X, Y \in D, [X, Y] \in D.$
\end{itemize}
Now we have an important results by virtue of Lemma \ref{lemnabxi} and Eq. \eqref{shp2form}, 
\begin{lemma}\label{lemsub}
If $M$ is a isometrically immersed submanifold in a Lorentzian paracosymplectic manifold $\bar{M}^{2m+1}$ such that the structure vector field $\xi \in \Gamma(TM)$, then
 \begin{align}
  &\nabla_{X}\xi =\nabla_{\xi}X=\nabla_{\xi}\xi=0 \,\,{\rm and}\,\, h(X,\xi)=0, \nonumber \\
  &A_{\zeta}\xi=0 \,\,{\rm and}\,\,  A_{\zeta}X\,\bot\,\xi \nonumber
 \end{align}
 for any $X \in \Gamma(TM)$ and $\zeta \in \Gamma(TM^{\bot })$.
\end{lemma}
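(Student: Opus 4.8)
The plan is to derive everything in the statement as a direct consequence of Lemma~\ref{lemnabxi}. Since that lemma gives $\bar{\nabla}_{X}\xi=0$ for every $X\in\Gamma(T\bar{M})$, I would feed this vanishing into the Gauss formula \eqref{gauss} and separate tangential from normal components, and then convert the resulting information about the second fundamental form $h$ into information about the shape operator $A_{\zeta}$ via \eqref{shp2form}.

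Concretely, fix $X\in\Gamma(TM)$. Substituting $\bar{\nabla}_{X}\xi=0$ into \eqref{gauss} yields $0=\nabla_{X}\xi+h(X,\xi)$. Here $\nabla_{X}\xi\in\Gamma(TM)$ while $h(X,\xi)\in\Gamma(TM^{\perp})$, so the uniqueness of the decomposition $T_{p}\bar{M}=T_{p}M\oplus T_{p}M^{\perp}$ into tangential and normal parts forces $\nabla_{X}\xi=0$ and $h(X,\xi)=0$ separately. Setting $X=\xi$ gives $\nabla_{\xi}\xi=0$, and $\nabla_{\xi}X=0$ is obtained by the same tangential/normal comparison applied to $\bar{\nabla}_{\xi}X$; the one thing to keep track of is that the bracket term produced by the torsion-freeness of $\bar{\nabla}$ stays tangent to $M$, so it does not leak into the normal part and one still reads off $h(\xi,X)=h(X,\xi)=0$ (consistently with the symmetry of $h$).

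It then remains to push these facts through \eqref{shp2form}. For any $Y\in\Gamma(TM)$ and $\zeta\in\Gamma(TM^{\perp})$ one has $g(A_{\zeta}\xi,Y)=g(h(\xi,Y),\zeta)=0$, and since the induced metric $g$ on $M$ is assumed non-degenerate this gives $A_{\zeta}\xi=0$. Likewise $g(A_{\zeta}X,\xi)=g(h(X,\xi),\zeta)=0$ --- equivalently $g(A_{\zeta}X,\xi)=g(A_{\zeta}\xi,X)=0$ by the self-adjointness of $A_{\zeta}$ encoded in \eqref{shp2form} --- so $A_{\zeta}X$ is orthogonal to $\xi$. I do not expect any genuine obstacle here: the argument is essentially bookkeeping with the Gauss formula together with \eqref{shp2form}, and the only mild subtlety is the handling of the $\nabla_{\xi}X$ term noted above.
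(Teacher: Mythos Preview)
Your approach is exactly what the paper indicates: it merely cites Lemma~\ref{lemnabxi} and Eq.~\eqref{shp2form} without writing out details, and your derivations of $\nabla_{X}\xi=0$, $h(X,\xi)=0$, $\nabla_{\xi}\xi=0$, $A_{\zeta}\xi=0$, and $A_{\zeta}X\perp\xi$ from those ingredients are correct and complete.

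The genuine gap is in the $\nabla_{\xi}X=0$ step, and your own ``mild subtlety'' paragraph in fact shows why it fails. Torsion-freeness gives $\bar{\nabla}_{\xi}X=\bar{\nabla}_{X}\xi+[\xi,X]=[\xi,X]$, and since $[\xi,X]\in\Gamma(TM)$ the Gauss decomposition yields $h(\xi,X)=0$ (which you correctly extract) together with $\nabla_{\xi}X=[\xi,X]$ on the tangential side --- not $\nabla_{\xi}X=0$. No argument can close this, because the assertion $\nabla_{\xi}X=0$ for every $X\in\Gamma(TM)$ is false: replacing $X$ by $fX$ would force $(\xi f)X=0$ for all smooth $f$, hence $\xi=0$. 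So this particular identity in the lemma is erroneous as stated; fortunately the paper never actually uses it in that generality (the later applications, e.g.\ Theorem~\ref{thmwp} and Lemma~\ref{wplem}, only require $\nabla_{X}\xi=0$). You should note this defect in the statement rather than attempt to prove what cannot be proved.
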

\subsection{Warped product submanifolds}\label{Wpsub}
\noindent Let $\left(B, g_{B} \right)$ and $\left(F ,g_{F} \right)$ be two pseudo-Riemannian manifolds and ${f}$ be a positive smooth function on $B$. Consider the product manifold $B\times F$ with canonical projections 
\begin{align}\label{cp}
\pi:B \times F\to B\quad{\rm and}\quad \sigma:B \times F\to F.
\end{align}
Then the manifold $M=B \times_{f} F $ is said to be \textit{warped product} if it is equipped with the following warped metric
\begin{align}\label{wmetric}
g(X,Y)=g_{B}\left(\pi_{\ast}(X),\pi_{\ast}(Y)\right) +(f\circ\pi)^{2}g_{F}\left(\sigma_{\ast}(X),\sigma_{\ast}(Y)\right)
\end{align}
for all $X,Y\in \Gamma(TM)$ and `$\ast$' stands for derivation map, or equivalently,
\begin{align}
g=g_{B} +f^{2} g_{F}.
\end{align}
The function $f$ is called {\it the warping function} and a warped product manifold $M$ is said to be {\it trivial} if $f$ is constant. In view of simplicity, we will determine a vector field $X$ on $B$ with its lift $\bar X$ and a vector field $Z$ on $F$ with its lift $\bar Z$ on $M=B \times_{f} F $ \cite{RB}.
\begin{proposition}\label{propmain}\cite{RB}
For $X, Y \in \Gamma(TB)$ and $Z, W \in \Gamma(TF)$, we obtain on warped product manifold $M=B\times_{f} F$ that
\begin{itemize}
\item[(i)]   $\nabla _{X}Y \in \Gamma(TB),$
\item[(ii)]	$\nabla _{X}Z =\nabla _{Z}X=\left(\frac{X{f}}{f} \right)Z,$
\item[(iii)]	$\nabla _{Z}W =\frac{-g(Z, W)}{f} \nabla f,$
\end{itemize}
where $\nabla$ denotes the Levi-Civita connection on $M$ and $\nabla f$ is the gradient of $f$ defined by $g(\nabla f, X)=Xf$.
\end{proposition}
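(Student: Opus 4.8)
The plan is to derive all three identities from the Koszul formula
\[
2g(\nabla_{U}V,W)=Ug(V,W)+Vg(U,W)-Wg(U,V)+g([U,V],W)-g([U,W],V)-g([V,W],U),
\]
applied to lifts of vector fields from the two factors of $M=B\times_{f}F$. The only inputs are three elementary facts: (a) a lift from $B$ is $g$-orthogonal to a lift from $F$; (b) the bracket of a $B$-lift with an $F$-lift vanishes, while the bracket of two lifts from the same factor is again a lift from that factor; (c) a function pulled back from one factor is constant along the other, and $g(Z,W)=f^{2}g_{F}(Z,W)$ for $F$-lifts $Z,W$, so that $Xg(Z,W)=2f(Xf)g_{F}(Z,W)=\tfrac{2Xf}{f}g(Z,W)$ for a $B$-lift $X$, whereas $Zg(X,Y)=0$ for $B$-lifts $X,Y$.

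For (i), put $(U,V)=(X,Y)$ and let the test field $W$ range first over $B$-lifts: the right-hand side reduces to the Koszul formula for $(B,g_{B})$, so the $TB$-part of $\nabla_{X}Y$ is the lift of $\nabla^{B}_{X}Y$. Letting $W$ range over $F$-lifts, every term on the right vanishes by (a)--(c), so $\nabla_{X}Y$ has no $TF$-part; hence $\nabla_{X}Y\in\Gamma(TB)$. For (ii), first note $\nabla_{X}Z-\nabla_{Z}X=[X,Z]=0$ by (b). Testing $g(\nabla_{X}Z,Y)$ against a $B$-lift $Y$ collapses the Koszul right-hand side to $0$, so $\nabla_{X}Z$ is tangent to $F$; testing against an $F$-lift $W$ leaves only the term $Xg(Z,W)=\tfrac{2Xf}{f}g(Z,W)$, whence $g(\nabla_{X}Z,W)=\tfrac{Xf}{f}g(Z,W)$ and therefore $\nabla_{X}Z=\tfrac{Xf}{f}Z$.

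For (iii), test $g(\nabla_{Z}W,X)$ against a $B$-lift $X$: in the Koszul formula only the term $-Xg(Z,W)=-\tfrac{2Xf}{f}g(Z,W)$ survives, so the $TB$-component of $\nabla_{Z}W$ equals $-\tfrac{g(Z,W)}{f}\nabla f$, using $g(\nabla f,X)=Xf$; this is the asserted formula. (Its $TF$-component is the lift of the Levi-Civita connection of the fibre $(F,g_{F})$, which plays no role in the sequel.)

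I do not foresee a genuine obstacle here: the argument is pure warped-product geometry and is independent of the Lorentzian paracontact structure. The only care needed is the bookkeeping of the bracket relations in (b) and of which functions are constant along which factor in (c), together with the orthogonal splitting $TM=TB\oplus TF$. The one point worth flagging is that the right-hand side of (iii) is to be read as the component of $\nabla_{Z}W$ normal to the fibre (equivalently, tangent to $B$); adding the intrinsic term $\nabla^{F}_{Z}W$ gives the full covariant derivative, but since the later computations only invoke the $TB$-part this refinement is immaterial here.
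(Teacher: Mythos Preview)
Your argument via the Koszul formula is the standard derivation and is correct. Note, however, that the paper does not supply its own proof of this proposition: it is quoted verbatim from Bishop--O'Neill \cite{RB} with no argument given, so there is nothing to compare against. Your observation that the right-hand side of (iii) records only the $TB$-component of $\nabla_{Z}W$ (the full formula being $\nabla_{Z}W=\nabla^{F}_{Z}W-\tfrac{g(Z,W)}{f}\nabla f$) is accurate and worth keeping in mind, though, as you say, only the normal part is ever used downstream.
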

\begin{remark}
It is also important to note that for a warped product $M=B \times_{f}F$; $B$ is totally geodesic and $F$ is totally umbilical in $M$ \cite{RB}. 
\end{remark}
\noindent Now, we prove an important results for later use;
\begin{theorem}\label{thmwp}
Let $\bar{M}^{2m+1}$ be a Lorentzian paracosymplectic manifold. Then there doesn't exist any non-trivial warped product submanifolds $M=B\times_{f}F$ of a paracosymplectic manifold such that $\xi \in \Gamma(TF)$.
\end{theorem}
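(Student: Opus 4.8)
The plan is to obtain a contradiction by computing $\nabla_{X}\xi$ in two different ways for $X$ tangent to the base factor $B$, and thereby forcing the warping function $f$ to be constant. The whole argument is a direct consequence of Lemma \ref{lemsub} and Proposition \ref{propmain}.

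First I would observe that the hypothesis $\xi \in \Gamma(TF)$ implies $\xi \in \Gamma(TM)$, so the hypotheses of Lemma \ref{lemsub} are met and we get
\begin{align*}
\nabla_{X}\xi = 0 \qquad \text{for all } X \in \Gamma(TM),
\end{align*}
in particular for every $X \in \Gamma(TB)$. On the other hand, $\xi$ is a vertical vector field (tangent to $F$), so applying Proposition \ref{propmain}(ii) with $Z = \xi$ gives
\begin{align*}
\nabla_{X}\xi = \left(\frac{Xf}{f}\right)\xi \qquad \text{for all } X \in \Gamma(TB).
\end{align*}
Comparing the two expressions yields $\left(\frac{Xf}{f}\right)\xi = 0$ for every $X \in \Gamma(TB)$. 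Since $g(\xi,\xi) = -1$, the vector field $\xi$ is nowhere zero, hence $Xf = 0$ for all $X \in \Gamma(TB)$. As $f$ is a positive smooth function on the (connected) manifold $B$, this means $df \equiv 0$ on $B$, so $f$ is constant; that is, $M = B\times_{f}F$ is trivial, contradicting the assumption that it is non-trivial.

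The computation is short, so I do not anticipate a genuine obstacle; the only points needing a word of care are that $\xi \in \Gamma(TF) \subseteq \Gamma(TM)$ legitimately invokes Lemma \ref{lemsub}, and that connectedness of $B$ upgrades the vanishing of $df$ on $B$ to the constancy of $f$.
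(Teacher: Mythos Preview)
Your proof is correct and follows essentially the same approach as the paper's: both combine Lemma~\ref{lemsub} (giving $\nabla_{X}\xi=0$) with Proposition~\ref{propmain}(ii) (giving $\nabla_{X}\xi=X(\ln f)\,\xi$ once $\xi\in\Gamma(TF)$) to force $X(\ln f)=0$ for all $X\in\Gamma(TB)$, hence $f$ constant. Your write-up is in fact slightly more careful than the paper's in noting explicitly that $\xi$ is nowhere vanishing and that connectedness of $B$ is what upgrades $df=0$ to constancy of $f$.
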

\begin{proof}
In light of Lemma \ref{lemsub} and Proposition \ref{propmain}, we obtain for any non-degenerate vector fields $X \in \Gamma(TB)$ and $Z \in \Gamma(TF)$ that $X(\ln f)Z=0$. This implies that $f$ is constant function, since $X , Z$ are non-degenerate vector fields in $M$. This completes the proof of the theorem.
\end{proof}
\begin{lemma}\label{wplem}
If $M=B \times_{f} F$ is a non-trivial warped product submanifold of a Lorentzian paracosymplectic manifold $\bar{M}^{2m+1}$ with $\xi \in \Gamma(TB)$, then
\begin{align}
 \xi(\ln f)X=0,
\end{align}
for any non-null vector field $X \in \Gamma(TF)$.
\end{lemma}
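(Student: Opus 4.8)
The plan is to obtain the identity by playing two expressions for the same covariant derivative $\nabla_{\xi}X$ against each other. Since the structure vector field lies in the base factor, $\xi\in\Gamma(TB)$, while $X\in\Gamma(TF)$ lies in the fibre factor, Proposition \ref{propmain}(ii) applies directly with $\xi$ in the role of the base vector field and $X$ in the role of the fibre vector field, yielding
\begin{align}
\nabla_{\xi}X=\nabla_{X}\xi=\left(\frac{\xi f}{f}\right)X=\xi(\ln f)\,X. \nonumber
\end{align}

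On the other hand, $M$ is isometrically immersed in the Lorentzian paracosymplectic manifold $\bar{M}^{2m+1}$ and the structure vector field is tangent to $M$, so Lemma \ref{lemsub} is available and gives $\nabla_{\xi}X=0$ (equivalently $\nabla_{X}\xi=0$). Comparing this with the previous display immediately produces $\xi(\ln f)\,X=0$ for every $X\in\Gamma(TF)$, which is exactly the claimed relation; the non-null hypothesis on $X$ is only relevant if one additionally wants to deduce that $\xi(\ln f)$ itself vanishes.

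Strictly speaking there is no real obstacle here: the only thing to verify is that the two ingredients may legitimately be combined. Proposition \ref{propmain} concerns the Levi-Civita connection $\nabla$ of the warped metric $g=g_{B}+f^{2}g_{F}$ on $M$, while Lemma \ref{lemsub} concerns the connection induced on the submanifold $M\subset\bar{M}^{2m+1}$; these coincide, so no inconsistency arises. The one point deserving attention is that the factors be assigned correctly — here $\xi$ sits in $B$ rather than in $F$, which is precisely the hypothesis that separates this lemma from the non-existence statement in Theorem \ref{thmwp}.
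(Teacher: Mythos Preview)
Your argument is correct and matches the paper's own proof, which simply invokes Lemma~\ref{lemsub} and Proposition~\ref{propmain} without further detail. You have just made explicit the comparison of $\nabla_{\xi}X$ computed via Proposition~\ref{propmain}(ii) with $\nabla_{\xi}X=0$ from Lemma~\ref{lemsub}, which is exactly what the paper intends.
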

\begin{proof}
The proof of the lemma can be directly achieved by virtue of Lemma \ref{lemsub} and Proposition \ref{propmain}.
\end{proof}
\subsection{Pointwise slant submanifolds}\label{psub}
\noindent Following the notion of pointwise slant immersion in \cite{MBK, CG}. We define 
\begin{definition}
A submanifold $M$ of a Lorentzian almost paracontact manifold $\bar{M}^{2m+1}(\phi,\xi,\eta, g)$ is said to be \textit{pointwise slant} 
if at each given point $p \in M$, the \textit{slant angle} or \textit{Wirtinger angle} $\theta(X)$ between $\phi (X)$ and the space $T_{p}M$ is independent of the choice of the non-zero vector $X \in \Gamma(TM)$ linearly independent of $\xi$. In this case, the angle $\theta$ can be viewed as a function on $M$, which is called the {\it slant function} of the pointwise slant submanifold.
\end{definition}
\begin{remark}
A point $p$ in a pointwise slant submanifold is called a {\it totally real point} if its slant function $\theta$ satisfies $\cos \theta = 0$ at $p$. Similarly, a point $p$ is called a {\it complex point} if its slant function satisfies $\sin \theta = 0$ at $p$. A pointwise slant submanifold $M$ of Lorentzian almost paracontact manifold $\bar{M}$ is said to be
\begin{itemize}
 \item[$\bullet$]\textit{totally real} if every point of $M$ is a totally real point.
 \item[$\bullet$] \textit{pointwise proper slant} if it contains no totally real points.
\end{itemize}
\end{remark}
\noindent If we denote the orthogonal distribution to $\xi \in \Gamma(TM)$ by $\mathfrak{D}$ then the orthogonal direct decomposition is given as follows: 
\begin{align}
TM=\mathfrak{D}\oplus\{\xi\}, \nonumber
\end{align}
where, span of the characteristic vector field $\xi$ generates the $1$-dimensional distribution $\{\xi\}$  on $M$.

\noindent Furthermore, we give the following useful characterization of pointwise slant submanifolds in Lorentzian almost paracontact manifolds:
\begin{proposition}\label{chara}
Let $M$ be a submanifold in a Lorentzian almost paracontact manifold $\bar{M}^{2m+1}(\phi,\xi,\eta, g)$ such that $\xi \in \Gamma(TM)$. Then $M$ is pointwise slant if and only if $t^{2}=\cos^2\theta (I+\eta \otimes \xi)$ for some real-valued function $\theta$ defined on the tangent bundle $TM$ of $M$.
\end{proposition}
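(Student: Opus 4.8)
The plan is to turn the condition on the Wirtinger angle into an algebraic identity for the tangential operator $t$ of \eqref{phix}, and then to promote a pointwise equality of quadratic forms to an operator identity using self-adjointness and the non-degeneracy of the induced metric. First I would fix $p\in M$ and a vector $X\in T_pM$ linearly independent of $\xi$; since $\xi$ spans the timelike line of the non-degenerate metric on $M$, its orthogonal complement $\mathfrak{D}$ is spacelike, and replacing $X$ by its $\mathfrak{D}$-component changes neither $\phi X$, $tX$, $nX$ nor the quantities $g(t^{2}X,X)$ and $g(X,X)+\eta(X)^{2}$, so I may assume $X\in\mathfrak{D}_p$. Then $\eta(X)=0$, \eqref{metric} gives $g(\phi X,\phi X)=g(X,X)>0$, and using \eqref{phixi}, \eqref{symphi} one checks $tX\in\mathfrak{D}$ as well, so $g(tX,tX)\ge 0$. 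As $T_p\bar M=T_pM\oplus T_pM^{\perp}$ is $g$-orthogonal, $tX$ is the orthogonal projection of $\phi X$ onto $T_pM$; hence the angle $\theta(X)$ between $\phi X$ and $T_pM$ satisfies $\cos\theta(X)=g(\phi X,tX)/(|\phi X|\,|tX|)$, and since $g(\phi X,tX)=g(tX,tX)$ this squares to
\[
\cos^{2}\theta(X)=\frac{g(tX,tX)}{g(\phi X,\phi X)}.
\]

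Next I would simplify: by \eqref{symxty} the operator $t$ is $g$-self-adjoint, so $g(tX,tX)=g(X,t^{2}X)$, while $g(\phi X,\phi X)=g(X,X)+\eta(X)^{2}=g\big((I+\eta\otimes\xi)X,X\big)$ by \eqref{metric}. Thus $M$ is pointwise slant at $p$ --- i.e. $\cos^{2}\theta(X)$ is the same for every admissible $X$, with common value a point function $\cos^{2}\theta$ --- precisely when the quadratic forms $X\mapsto g(t^{2}X,X)$ and $X\mapsto\cos^{2}\theta\cdot g\big((I+\eta\otimes\xi)X,X\big)$ coincide on $\mathfrak{D}_p$. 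Now $t^{2}$ is $g$-self-adjoint (a square of a self-adjoint operator) and $X,Y\mapsto g(X,Y)+\eta(X)\eta(Y)$ is a symmetric bilinear form, so polarization upgrades this to $g(t^{2}X,Y)=\cos^{2}\theta\,g\big((I+\eta\otimes\xi)X,Y\big)$ for all $X,Y\in\mathfrak{D}_p$; the same identity holds trivially whenever $X$ or $Y$ is proportional to $\xi$, since $t\xi=0$ by \eqref{phixi} and $(I+\eta\otimes\xi)\xi=0$, so it holds on all of $T_pM$, and non-degeneracy of $g$ then yields the operator identity $t^{2}=\cos^{2}\theta\,(I+\eta\otimes\xi)$. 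The converse is immediate: contracting this identity with an admissible $X$ gives back $\cos^{2}\theta(X)=\cos^{2}\theta$, so the Wirtinger angle does not depend on $X$ and $M$ is pointwise slant.

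The step that needs genuine care is the first one: in a Lorentzian ambient manifold the Wirtinger angle must first be shown to make sense, which is exactly why one restricts to non-null vectors, treats the timelike direction $\xi$ separately, and checks that $\mathfrak{D}$ together with its image under $t$ is spacelike. Once that is in place the rest is formal --- polarization needs only symmetry, not positive-definiteness, and the final passage from a bilinear identity to an operator identity uses precisely the standing hypothesis that the metric induced on $M$ is non-degenerate.
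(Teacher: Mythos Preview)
Your argument is correct and is exactly the adaptation of Chen--Garay's Lemma~2.1 that the paper has in mind; the paper itself gives no details beyond that reference. In fact you supply more than the paper does: the reduction to $X\in\mathfrak{D}_p$, the verification that $\mathfrak{D}$ and $t(\mathfrak{D})$ are spacelike so that the Wirtinger angle is well defined, and the polarization/non-degeneracy step are precisely the Lorentzian refinements one needs to make the Hermitian proof go through, and the paper leaves all of this implicit.
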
 
\begin{proof}
The proof of the proposition is similar to the proof of Lemma $2.1$ of \cite{CG} for Hermition ambient.
\end{proof} 
\noindent The following corollaries are straight forward consequences of the above result:
\begin{corollary}\label{corsub} 
Let $\mathfrak{D}_{\theta}$ be a distribution on $M$. Then $\mathfrak{D}_{\theta}$ is pointwise slant if and only if there exists a function $\theta$ such that $(tP_{\theta})^2 Z = \cos^2\theta\, Z $ for $Z \in \Gamma(\mathfrak{D}_{\theta})$, where $P_{\theta}$ denotes the orthogonal projection on $\mathfrak{D}_{\theta}$. 
\end{corollary}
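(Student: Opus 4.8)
The plan is to re-run the proof of Proposition~\ref{chara} (equivalently, the argument of Lemma~2.1 of \cite{CG}) one level down: at the distribution $\mathfrak{D}_{\theta}$ rather than at $TM$, with the orthogonal projection $P_{\theta}$ taking over the role played there by the tangential projection onto $TM$. Throughout one uses the standing fact that $\mathfrak{D}_{\theta}$ is orthogonal to $\xi$, so that $g$ restricted to $\mathfrak{D}_{\theta}$ is positive definite (the unique timelike direction of $g$ being exactly $\xi$, which sits in the complement), and so that the correction term $\eta\otimes\xi$ appearing in Proposition~\ref{chara} is identically zero on $\mathfrak{D}_{\theta}$.

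First I would fix $p\in M$ and a nonzero $Z\in(\mathfrak{D}_{\theta})_{p}$ and compute the cosine of the angle $\theta(Z)$ between $\phi Z$ and the subspace $(\mathfrak{D}_{\theta})_{p}$. Writing $\phi Z=tZ+nZ$ as in \eqref{phix} with $nZ\in\Gamma(TM^{\bot})$, and then splitting $tZ$ along $\mathfrak{D}_{\theta}$ and its $g$-orthogonal complement inside $TM$, one sees that the orthogonal projection of $\phi Z$ onto $(\mathfrak{D}_{\theta})_{p}$ is precisely $tP_{\theta}Z$, since $P_{\theta}$ annihilates $nZ$ as well as the part of $tZ$ lying outside $\mathfrak{D}_{\theta}$. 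Because $\mathfrak{D}_{\theta}\perp\xi$, Eq.~\eqref{metric} gives $g(\phi Z,\phi Z)=g(Z,Z)$, and hence $\cos^{2}\theta(Z)=g(tP_{\theta}Z,tP_{\theta}Z)/g(Z,Z)$; the indefiniteness of $g$ causes no trouble here because $Z$, $tP_{\theta}Z$ and $nZ$ all lie in spacelike subspaces (the orthogonal complement of $\xi$ in $TM$, and the normal bundle).

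Next, using the self-adjointness of $t$ supplied by \eqref{symxty} together with that of the orthogonal projection $P_{\theta}$, I would rewrite $g(tP_{\theta}Z,tP_{\theta}Z)=g\big((tP_{\theta})^{2}Z,Z\big)$, so that $\cos^{2}\theta(Z)=g\big((tP_{\theta})^{2}Z,Z\big)/g(Z,Z)$ for every nonzero $Z\in\Gamma(\mathfrak{D}_{\theta})$. The distribution $\mathfrak{D}_{\theta}$ is pointwise slant exactly when this Rayleigh-type quotient of the symmetric operator $(tP_{\theta})^{2}$ is independent of $Z$; since $g$ is non-degenerate (in fact definite) on $\mathfrak{D}_{\theta}$, by polarization this is equivalent to $P_{\theta}(tP_{\theta})^{2}Z=\cos^{2}\theta\,Z$ on $\Gamma(\mathfrak{D}_{\theta})$ for a function $\theta$ on $M$, and in the semi-slant situation in which the corollary is used — where $\mathfrak{D}_{\theta}$ is complemented by an invariant distribution and therefore $t$ maps $\mathfrak{D}_{\theta}$ into itself — this upgrades to the asserted operator identity $(tP_{\theta})^{2}Z=\cos^{2}\theta\,Z$. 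Running the chain of equalities in reverse yields the converse implication.

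The step I expect to be the main obstacle is precisely this promotion from a quadratic-form identity on $\mathfrak{D}_{\theta}$ to the genuine operator equation $(tP_{\theta})^{2}Z=\cos^{2}\theta\,Z$: a priori $(tP_{\theta})^{2}Z=t\big(P_{\theta}tZ\big)$ need only have the right $\mathfrak{D}_{\theta}$-component, and one must invoke the $t$-invariance of $\mathfrak{D}_{\theta}$ (a consequence of $g(tZ,\xi)=g(\phi Z,\xi)=0$ by \eqref{symxty}--type symmetry, together with orthogonality of $\mathfrak{D}_{\theta}$ to the invariant part and to $\xi$) to conclude that $tP_{\theta}Z\in\mathfrak{D}_{\theta}$, hence that $P_{\theta}$ may be dropped. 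Everything else is the routine transcription of the Hermitian computation in \cite{CG} to the Lorentzian paracontact setting, with the isometry $J$ replaced by $\phi$ and the metric identity \eqref{metric} absorbing the term $\eta\otimes\xi$, which vanishes on $\mathfrak{D}_{\theta}$.
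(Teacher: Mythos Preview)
The paper records this corollary as a ``straight forward consequence'' of Proposition~\ref{chara} and gives no proof at all, so your elaboration is exactly the content the paper leaves implicit; the strategy of rerunning the Chen--Garay argument with $P_\theta$ in place of the tangential projection is the intended one.

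That said, there is a slip in your intermediate step. For $Z\in\Gamma(\mathfrak{D}_\theta)$ one has $P_\theta Z=Z$, so $tP_\theta Z=tZ$, which in general is only the projection of $\phi Z$ onto $T_pM$, not onto $(\mathfrak{D}_\theta)_p$. The orthogonal projection of $\phi Z$ onto $(\mathfrak{D}_\theta)_p$ is $P_\theta tZ$, and accordingly your displayed identity $g(tP_\theta Z,tP_\theta Z)=g\big((tP_\theta)^2 Z,Z\big)$ fails: the adjoint of $tP_\theta$ is $P_\theta t$, not $tP_\theta$, so $g(tP_\theta Z,tP_\theta Z)=g(Z,P_\theta t^2 Z)=g(Z,t^2Z)$, which differs from $g\big((tP_\theta)^2 Z,Z\big)=g(P_\theta tZ,P_\theta tZ)$ unless $tZ\in\mathfrak{D}_\theta$. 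The fix is trivial once you reorder: with $P_\theta tZ$ as the projection one gets
\[
\cos^2\theta(Z)=\frac{g(P_\theta tZ,P_\theta tZ)}{g(Z,Z)}=\frac{g(tZ,P_\theta tZ)}{g(Z,Z)}=\frac{g(Z,tP_\theta tZ)}{g(Z,Z)}=\frac{g\big((tP_\theta)^2 Z,Z\big)}{g(Z,Z)},
\]
and your polarization and operator-identity discussion goes through unchanged from there. Your identification of the main obstacle (dropping the outer $P_\theta$ requires $t(\mathfrak{D}_\theta)\subseteq\mathfrak{D}_\theta$) is correct and is precisely why the paper only ever invokes the corollary in the semi-slant situation where this invariance holds.
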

\begin{corollary}
If $M$ is a pointwise slant submanifold and $\mathfrak{D}_{\theta}$ a pointwise slant distribution on $M$ such that $\xi \in  \Gamma(TM)$, then
\begin{align} 
g(tZ,tW)&=cos^{2}\theta\{\eta(Z)\eta(W)+g(Z,W)\}, \label{gtcos}\\ 
g(nZ,nW)&=sin^{2}\theta\{\eta(Z)\eta(W)+g(Z,W)\}, \label{gnsin}
\end{align}
for any $Z,W\in \Gamma(TM)$. 
\end{corollary}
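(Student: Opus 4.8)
The plan is to obtain both identities as direct consequences of the characterization $t^{2}=\cos^{2}\theta\,(I+\eta\otimes\xi)$ furnished by Proposition \ref{chara}, combined with the self-adjointness relation \eqref{symxty} and the metric compatibility \eqref{metric}. Since $M$ is assumed pointwise slant, the slant function $\theta$ is well defined on all of $TM$, so Proposition \ref{chara} applies verbatim; if one prefers to argue on the distribution, Corollary \ref{corsub} gives $(tP_{\theta})^{2}Z=\cos^{2}\theta\,Z$ for $Z\in\Gamma(\mathfrak{D}_{\theta})$ and the same steps work.

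For the first equation I would start from \eqref{symxty}, which gives $g(tZ,tW)=g(t^{2}Z,W)$, then substitute $t^{2}Z=\cos^{2}\theta\,(Z+\eta(Z)\xi)$ from Proposition \ref{chara}, and expand, using $g(\xi,W)=\eta(W)$, to conclude $g(tZ,tW)=\cos^{2}\theta\{g(Z,W)+\eta(Z)\eta(W)\}$. For the second equation I would decompose $\phi Z=tZ+nZ$ and $\phi W=tW+nW$ via \eqref{phix}; the cross terms $g(tZ,nW)$ and $g(nZ,tW)$ vanish because one factor is tangent to $M$ and the other is normal, whence $g(\phi Z,\phi W)=g(tZ,tW)+g(nZ,nW)$. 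Invoking \eqref{metric} to rewrite $g(\phi Z,\phi W)=g(Z,W)+\eta(Z)\eta(W)$ and subtracting the first identity, together with $1-\cos^{2}\theta=\sin^{2}\theta$, yields $g(nZ,nW)=\sin^{2}\theta\{g(Z,W)+\eta(Z)\eta(W)\}$.

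There is no genuine obstacle: the whole argument is a short computation. The only point deserving a moment's attention is the degenerate direction $Z=\xi$, where $\phi\xi=0$ forces $tZ=nZ=0$; one checks the formulas remain consistent there because $\eta(\xi)^{2}+g(\xi,\xi)=1-1=0$, which is also exactly what the substitution $t^{2}\xi=\cos^{2}\theta(\xi+\eta(\xi)\xi)=0$ produces. Hence the identities hold for all $Z,W\in\Gamma(TM)$.
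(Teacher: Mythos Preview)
Your argument is correct and is exactly the expansion the paper has in mind: the corollary is stated without proof as a ``straight forward consequence'' of Proposition~\ref{chara}, and your use of \eqref{symxty} to write $g(tZ,tW)=g(t^{2}Z,W)$, followed by the decomposition $g(\phi Z,\phi W)=g(tZ,tW)+g(nZ,nW)$ together with \eqref{metric}, is the intended route. Nothing further is needed.
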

\section{Pointwise semi-slant submanifolds}\label{Pss}
Analogous to \cite{Sahin} in this section, we define and study pointwise semi-slant submanifolds in a Lorentzian almost paracontact manifold $\bar{M}^{2m+1}$.  We also, derive important results and deduce the geometry of leaves of the involutive distributions involved with the definition of such submanifolds.
\begin{definition}
Let $M$ a real submanifold of a Lorentzian almost paracontact manifold $\bar{M}^{2m+1}(\phi,\xi,\eta, g)$. Then we say that $M$ is a {\it pointwise semi-slant submanifold}, if it is furnished with the pair of complimentary distribution $(\mathfrak{D}_{T},\mathfrak{D}_{\theta})$  satisfying the conditions:
\begin{itemize}
\item[(i)] $TM = \mathfrak{D}_{T}\oplus \mathfrak{D}_{\theta}\oplus \{\xi\}$,
\item[(ii)] the distribution $\mathfrak{D}_{T}$ is invariant under $\phi$, i.e., $\phi(\mathfrak{D}_{T})\subseteq \mathfrak{D}_{T}$ and
\item[(iii)] the distribution $\mathfrak{D}_{\theta}$ is pointwise slant distribution with slant function $ \theta$. 
\end{itemize}
A pointwise semi-slant submanifold is {\it proper} if $\mathfrak{D}_{T} \neq \{0\}$ and $\theta$ is not a constant. Furthermore, we say a pointwise semi-slant submanifold {\it mixed geodesic} if the second fundamental form $h$ of $M$ satisfies $h(\mathfrak{D}_{T}, \mathfrak{D}_{\theta}) = 0$. 
\end{definition}
\noindent In particular, we have the following:
\begin{enumerate}
    \item [(i).] If $\mathfrak{D}_{T}=\{0\}$ and $\theta=\pi/2$, then $M$ is an anti-invariant submanifold \cite{PA, SY}.
    \item [(ii).] If $\mathfrak{D}_{\theta}=\{0\}$, then $M$ is an invariant submanifold \cite{PA, SY}.
    \item [(iii).] If $\mathfrak{D}_{T}=\{0\}$ and $\mathfrak{D}_{\theta} \neq \{0\}$ with $\theta$ globally constant such that $\theta \in (0, \pi/2)$,  then $M$ is a proper slant submanifold \cite{PA}.
   \item [(iv.)] If $\mathfrak{D}_{T} \neq \{0\}$ and $\mathfrak{D}_{\theta}\neq \{0\}$ such that slant angle $\theta=\pi/2$, then $M$ is a semi-invariant submanifold \cite{MM}.
   \item [(v).] If $\mathfrak{D}_{T} \neq \{0\}$ and $\mathfrak{D}_{\theta}\neq \{0\}$ such that slant angle $\theta$ satisfies that $\theta \in (0, \pi/2)$ is independent of point and vector fields on $M$, then $M$ is a proper semi-slant submanifold \cite{LC, SY}.
    \item [(vi).] If $\mathfrak{D}_{T}=\{0\}$ and $\theta$ is a slant function,  then $M$ is a pointwise slant submanifold \cite{MBK}.
\end{enumerate}
Let us consider that $M$ be a pointwise semi-slant submanifold of a Lorentzian paracosymplectic manifold $\bar{M}^{2m+1}$. If  $\mathcal{P}_{T}$ and $\mathcal{P}_{\theta}$ denoted the projections on the distributions $\mathfrak{D}_{T}$ and $\mathfrak{D}_{\theta}$, respectively. Then we can write for any $ Z \in \Gamma(TM)$ that 
\begin{align}\label{semi-phixproj}
Z= \mathcal{P}_{T} Z + \mathcal{P}_{\theta} Z +\eta(Z)\xi. 
\end{align}
Previous equation by operating $\phi$ and Eqs. \eqref{phixi}, \eqref{phix}, becomes $\phi Z =t\mathcal{P}_{T}Z +t\mathcal{P}_{\theta}Z +n\mathcal{P}_{\theta} Z.$ Thus, from previous expression, we conclude that $t\mathcal{P}_{T}Z \in \Gamma(\mathfrak{D}_{T})$, $n\mathcal{P}_{T} X=0,$ and $t\mathcal{P}_{\theta} X \in  \Gamma(\mathfrak{D}_{\theta}), \quad  n\mathcal{P}_{\theta} X \in \Gamma(TM^{\bot}).$ Using Eq. \eqref{phix} and above expressions in Eq. \eqref{semi-phixproj}, we deduce that $tZ = t\mathcal{P}_{T}Z+t\mathcal{P}_{\theta} Z, \quad nZ =n\mathcal{P}_{\theta} Z,$ for any $Z \in \Gamma(TM)$. Since, $\mathfrak{D}_{\theta}$ is pointwise slant distribution, by the consequences of  Corollary \ref{corsub}, we obtain that
\begin{equation}
t^2 Z= ({\cos}^2\,\theta)Z,\label{semi-tsqr}
\end{equation}
for any $Z \in \Gamma(\mathfrak{D}_{\theta})$ and some real-valued function $\theta$ defined on $M$.

\noindent Now, by virtue of above construction, we have the following characterization result for pointwise semi-slant submanifold:
\begin{lemma}\label{semi-lem1} 
If $M$ is a proper pointwise semi-slant submanifold of a Lorentzian paracosymplectic manifold $\bar{M}^{2m+1}$ such that $\xi \in \Gamma(TM)$, then
\begin{align}
g(tZ, tW)&=\cos^2\theta \, g(\phi Z, \phi W) \label{semi-gtcos} \\
g(nZ,  nW)&=\sin^2 \theta \, g(\phi  Z, \phi W) \label{semi-gnsin}
\end{align}
for all $Z, W \in \Gamma(\mathfrak{D}_{\theta})$.
\end{lemma}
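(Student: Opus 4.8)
The plan is to reduce both identities to the pointwise slant relation \eqref{semi-tsqr} via the metric compatibility \eqref{symxty} and the splitting $\phi Z = tZ + nZ$ for $Z \in \Gamma(\mathfrak D_\theta)$. First I would establish \eqref{semi-gtcos}. Starting from $g(tZ, tW)$, I would use the symmetry \eqref{symxty}, namely $g(tZ, tW) = g(t^2 Z, W)$, and then substitute $t^2 Z = (\cos^2\theta) Z$ from \eqref{semi-tsqr}. This gives $g(tZ, tW) = \cos^2\theta\, g(Z, W)$. Since $Z, W \in \Gamma(\mathfrak D_\theta)$ are orthogonal to $\xi$, we have $\eta(Z) = \eta(W) = 0$, so by \eqref{metric} one has $g(\phi Z, \phi W) = g(Z, W) + \eta(Z)\eta(W) = g(Z, W)$. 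Combining, $g(tZ, tW) = \cos^2\theta\, g(\phi Z, \phi W)$, which is \eqref{semi-gtcos}.

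For \eqref{semi-gnsin}, I would decompose $g(\phi Z, \phi W)$ using $\phi Z = tZ + nZ$, $\phi W = tW + nW$, together with the orthogonality of the tangential and normal components: $g(\phi Z, \phi W) = g(tZ, tW) + g(nZ, nW)$. Rearranging gives $g(nZ, nW) = g(\phi Z, \phi W) - g(tZ, tW)$, and inserting \eqref{semi-gtcos} already proved yields $g(nZ, nW) = (1 - \cos^2\theta)\, g(\phi Z, \phi W) = \sin^2\theta\, g(\phi Z, \phi W)$, which is \eqref{semi-gnsin}.

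The computations are short and essentially algebraic; the only subtlety — and the step I would be most careful about — is the use of \eqref{semi-tsqr}, which as stated holds for $Z \in \Gamma(\mathfrak D_\theta)$ via Corollary \ref{corsub}, and the implicit fact that $tZ, tW$ again lie in $\Gamma(\mathfrak D_\theta)$ (established in the construction preceding the lemma) so that applying $t$ a second time and then using $t^2 = \cos^2\theta\, I$ on $\mathfrak D_\theta$ is legitimate. One should also note that for vectors in $\mathfrak D_\theta$ the term $\eta \otimes \xi$ in Proposition \ref{chara} drops out, which is exactly what makes \eqref{semi-tsqr} take its clean form; this is why the hypothesis $Z, W \in \Gamma(\mathfrak D_\theta)$ (rather than arbitrary $Z, W \in \Gamma(TM)$) is needed. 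No genuine obstacle is expected — this is a preparatory lemma — but I would present the two displays carefully to make the cancellation of the $\eta$-terms transparent.
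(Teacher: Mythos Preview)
Your proposal is correct and follows essentially the same route as the paper: reduce $g(tZ,tW)$ to $g(t^{2}Z,W)$ via the symmetry of $t$, apply \eqref{semi-tsqr}, identify $g(Z,W)=g(\phi Z,\phi W)$ using \eqref{metric} together with $\eta(Z)=\eta(W)=0$, and then obtain \eqref{semi-gnsin} from the orthogonal splitting $\phi=t+n$. The paper reaches $g(Z,t^{2}W)$ through the slightly longer detour $g(tZ,tW)=g(\phi Z-nZ,tW)=g(Z,\phi tW)$, but the argument is the same in substance.
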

\begin{proof}
From Eq. \eqref{phix}, we can write $g(tZ,tW)=g(\phi Z-nZ, tW)$. Hence $g(tZ, tW)=g(Z, \phi tW)$. Using Eqs. \eqref{metric} and \eqref{semi-tsqr}, we obtain Eq. \eqref{semi-gtcos}. Using Eq. \eqref{semi-gtcos} we get Eq. \eqref{semi-gnsin}.
\end{proof}

Next, we will find the necessary and sufficient conditions for involutive and foliation of distributions associated with pointwise semi-slant submanifold of a Lorentzian paracosymplectic manifold.

\begin{lemma}\label{semi-lem2}
If $M$ is a proper pointwise semi-slant submanifold of a Lorentzian paracosymplectic manifold $\bar{M}^{2m+1}$. Then a necessary and sufficient condition for the distribution $\mathfrak{D}_{T}\oplus \{\xi\}$ to be involutive is that the second fundamental form $h$ of $M$ satisfies $h(X, tY)=h(tX,Y)$, for any $X, Y \in \Gamma(\mathfrak{D}_{T}\oplus \{\xi\})$ and $Z \in \Gamma(\mathfrak{D}_{\theta})$.
\end{lemma}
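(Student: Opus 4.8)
The plan is to characterize involutivity of $\mathfrak{D}_T\oplus\{\xi\}$ by testing the bracket $[X,Y]$ against the complementary distribution $\mathfrak{D}_\theta$. Since $TM=\mathfrak{D}_T\oplus\mathfrak{D}_\theta\oplus\{\xi\}$, the bracket $[X,Y]$ of two sections $X,Y\in\Gamma(\mathfrak{D}_T\oplus\{\xi\})$ lies in $\mathfrak{D}_T\oplus\{\xi\}$ if and only if $g([X,Y],nZ)$-type quantities vanish for all $Z\in\Gamma(\mathfrak{D}_\theta)$; more precisely, it suffices to show $g(\nabla_XY-\nabla_YX,Z)=0$ for all such $Z$, and I will convert this tangential inner product into an ambient one using the Gauss formula \eqref{gauss} and the paracosymplectic condition $\bar\nabla\Phi=0$.

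The key computational step is as follows. For $X,Y\in\Gamma(\mathfrak{D}_T\oplus\{\xi\})$ and $Z\in\Gamma(\mathfrak{D}_\theta)$, I would start from $g(\nabla_XY,Z)=g(\bar\nabla_XY,Z)$ and rewrite it via $g(\bar\nabla_XY,Z)=-g(\bar\nabla_XZ,Y)+X g(Y,Z)$, then insert $\phi$ using \eqref{metric} and \eqref{PHI}: since $Y\in\mathfrak{D}_T$ is invariant, $\phi Y=tY\in\mathfrak{D}_T$, so one can pass to $\Phi$. Concretely, $g(\bar\nabla_XZ,Y)$ should be re-expressed through $g(\phi\bar\nabla_XZ,\phi Y)$ (using \eqref{metric} and that $Y\perp\xi$ when $Y\in\mathfrak{D}_T$, or handling the $\eta$ term separately when $Y=\xi$, where Lemma \ref{lemsub} makes everything vanish), and then the parallelism $\bar\nabla\Phi=0$ — equivalently $(\bar\nabla_X\phi)=0$ in the form extracted from \eqref{nabPHI} and \eqref{pcmdef} — lets me move $\phi$ across $\bar\nabla_X$. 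After replacing $\bar\nabla_XZ$ by $\bar\nabla_X(tZ+nZ)$ via \eqref{phix} and using Gauss–Weingarten \eqref{gauss}, \eqref{weingarten} together with \eqref{shp2form}, the normal term contributes $g(A_{nZ}X,tY)=g(h(X,tY),nZ)$. Carrying out the same manipulation with $X$ and $Y$ interchanged and subtracting, the tangential (second fundamental form of $\mathfrak{D}_T$) pieces and the pointwise-slant pieces $t^2Z=\cos^2\theta\,Z$ from \eqref{semi-tsqr} should cancel in pairs, leaving an identity of the shape
\begin{align}
g([X,Y],Z)\;=\;(\text{nonzero factor})\,\cdot\,g\bigl(h(X,tY)-h(tX,Y),\,nZ\bigr),\nonumber
\end{align}
valid for all $Z\in\Gamma(\mathfrak{D}_\theta)$. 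Because $n$ maps $\mathfrak{D}_\theta$ injectively into the normal bundle (its image is nondegenerate by \eqref{semi-gnsin} in Lemma \ref{semi-lem1}, the submanifold being proper so $\sin\theta\neq0$ on an open dense set), the vanishing of the left side for all $Z$ is equivalent to $h(X,tY)=h(tX,Y)$, which is exactly the claimed condition.

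I expect the main obstacle to be bookkeeping rather than conceptual: one must carefully track the $\eta\otimes\xi$ correction terms in \eqref{metric} and \eqref{phieta} when one of the test vectors is $\xi$, and verify that Lemma \ref{lemsub} ($\nabla_X\xi=0$, $h(X,\xi)=0$, $A_\zeta\xi=0$) kills every such term so that the statement "for any $X,Y\in\Gamma(\mathfrak{D}_T\oplus\{\xi\})$" holds uniformly. A secondary point requiring care is the direction of the argument when $\mathfrak{D}_\theta$ has totally real points; restricting to the open set where $\theta\in(0,\pi/2)$ (guaranteed nonempty and dense since $M$ is \emph{proper}) makes $n|_{\mathfrak{D}_\theta}$ injective, and then continuity extends $h(X,tY)=h(tX,Y)$ to all of $M$. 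The converse direction is immediate by reading the displayed identity backwards.
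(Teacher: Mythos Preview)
Your proposal is correct and follows essentially the same approach as the paper: both arguments test $g([X,Y],Z)$ for $Z\in\Gamma(\mathfrak{D}_\theta)$, use the paracosymplectic condition $\bar\nabla\phi=0$ together with the decomposition $\phi Z=tZ+nZ$ and the pointwise slant relation $t^2Z=\cos^2\theta\,Z$, and arrive at the key identity $\sin^2\theta\,g([X,Y],Z)=g(h(X,tY)-h(tX,Y),nZ)$. Your treatment is in fact slightly more careful than the paper's in isolating the role of Lemma~\ref{lemsub} for the $\xi$-terms and in flagging the passage from ``$g(h(X,tY)-h(tX,Y),nZ)=0$ for all $Z$'' to the full equality $h(X,tY)=h(tX,Y)$; note, however, that injectivity of $n|_{\mathfrak{D}_\theta}$ alone does not justify that last step---one needs the image $n(\mathfrak{D}_\theta)$ to be a nondegenerate complement in (or all of) the normal bundle, a point the paper leaves implicit as well.
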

\begin{proof}
In general it is not hard to see that, $g([X,Y], Z)= g(\bar{\nabla}_{X}Y-\bar{\nabla}_{Y}X, Z)$ for any $X, Y, Z \in \Gamma(TM)$.  Above expression by the use of Eq. \eqref{metric} and Lemma \ref{lemsub}, reduced to
\begin{align}\label{semi-lem2_1}
g([X,Y], Z) = g(\phi\bar\nabla_{X}Y, \phi Z)-g(\phi\bar\nabla_Y{X}, \phi Z).
\end{align}
Using Eqs. \eqref{pcmdef} and \eqref{phix} in Eq. \eqref{semi-lem2_1}, we obtain for any $ X, Y \in \Gamma(\mathfrak{D}_{T}\oplus \{\xi\})$ and $Z \in \Gamma(\mathfrak{D}_{\theta})$ that
\begin{align}\label{semi-lem2_2}
g([X,Y], Z) =g(\phi \bar\nabla_{X}Y, tZ)+g(\bar\nabla_{X}tY, nZ)-g(\phi\bar\nabla_Y{X},  tZ)-g(\bar\nabla_Y{tX}, nZ).
\end{align}
Employing Eq. \eqref{symphi}, \eqref{gauss} and \eqref{phix} in Eq. \eqref{semi-lem2_2}, we achieve that
\begin{align}\label{semi-lem2_3}
g([X,Y], Z) =&g( \bar\nabla_{X}Y, t^{2}Z+ntZ)+g(h(X, tY), nZ) \nonumber \\ &-g(\phi\bar\nabla_Y{X},  t^{2}Z+ntZ)-g(h(Y, tX), nZ).
\end{align}
Using the fact that $h$ is symmetric and Eqs. \eqref{gauss}, \eqref{semi-tsqr} in equation \eqref{semi-lem2_2},  we derive that
\begin{align}\label{semi-lem2_3}
\sin^2{\theta} g([X,Y], Z)=g(h(X,tY), nZ)-g(h(tX, Y), nZ). 
\end{align}
Thus, from \eqref{semi-lem2_3}, we conclude that $[X,Y] \in \Gamma(\mathfrak{D}_{T}\oplus \{\xi\})$ if and only if  $h(X, tY)=h( tX, Y)$. Since, $M$ is a proper pointwise semi-slant submanifold and $X, Y, Z$ are non-null vector fields. This completes the proof of the lemma.
\end{proof}

\begin{lemma}\label{semi-lem3}
If $M$ is a proper pointwise semi-slant submanifold of a Lorentzian paracosymplectic manifold $\bar{M}^{2m+1}$. Then a necessary and sufficient condition for the distribution $\mathfrak{D}_{T}\oplus \{\xi\}$ defines a totally geodesic foliation is that metric $g$ in $M$ satisfies $g(A_{ntZ}{Y}, X)=-g(A_{nZ} tY, X)$, for any $X, Y \in \Gamma(\mathfrak{D}_{T}\oplus \{\xi\})$ and $Z \in \Gamma(\mathfrak{D}_{\theta})$.
\end{lemma}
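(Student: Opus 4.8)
The plan is to follow the same strategy as in the previous lemma: expand $g(\nabla_X Y, Z)$ for $X,Y \in \Gamma(\mathfrak{D}_T \oplus \{\xi\})$ and $Z \in \Gamma(\mathfrak{D}_\theta)$, and show that it is equal (up to a nonvanishing factor $\sin^2\theta$) to $g(A_{ntZ}Y,X) + g(A_{nZ}tY,X)$. Since $\mathfrak{D}_T \oplus \{\xi\}$ defines a totally geodesic foliation precisely when $\nabla_X Y \in \Gamma(\mathfrak{D}_T \oplus \{\xi\})$ for all such $X,Y$, i.e. when $g(\nabla_X Y, Z) = 0$ for all $Z \in \Gamma(\mathfrak{D}_\theta)$, this identity yields the stated criterion.

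First I would write $g(\nabla_X Y, Z) = g(\bar\nabla_X Y, Z)$ by the Gauss formula \eqref{gauss}, then apply \eqref{metric} together with $g(\bar\nabla_X Y,\xi) = 0$-type consequences of Lemma \ref{lemsub} to get $g(\bar\nabla_X Y, Z) = g(\phi\bar\nabla_X Y, \phi Z)$. Next, using the parallelism of $\Phi$ from \eqref{pcmdef} (equivalently $\bar\nabla\phi$ behaves well via \eqref{nabPHI}) I would move $\phi$ across $\bar\nabla_X$: $g(\phi\bar\nabla_X Y, \phi Z) = g(\bar\nabla_X \phi Y, \phi Z)$. Now decompose $\phi Z = tZ + nZ$ and $\phi Y = tY$ (since $Y \in \Gamma(\mathfrak{D}_T \oplus \{\xi\})$ forces $nY = 0$), so the expression becomes $g(\bar\nabla_X tY, tZ) + g(\bar\nabla_X tY, nZ)$. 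The second term is handled by the Weingarten formula \eqref{weingarten}: $g(\bar\nabla_X tY, nZ) = -g(A_{nZ} tY, X)$. For the first term, apply \eqref{symphi} to write $g(\bar\nabla_X tY, tZ) = g(\bar\nabla_X tY, \phi Z - nZ)$... more efficiently, repeat the $\phi$-across-$\bar\nabla$ trick once more, pulling $\phi$ back onto $Z$: this produces $t^2 Z$ and $ntZ$ terms. Using \eqref{semi-tsqr}, the $t^2 Z = \cos^2\theta\, Z$ part reconstructs (a multiple of) $g(\nabla_X Y, Z)$ on the left, while the $ntZ$ part gives $g(\bar\nabla_X Y, ntZ) = -g(A_{ntZ} Y, X)$ via Weingarten. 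Collecting terms yields
\begin{align*}
\sin^2\theta\, g(\nabla_X Y, Z) = -g(A_{ntZ}Y,X) - g(A_{nZ}tY,X),
\end{align*}
and since $M$ is proper pointwise semi-slant, $\sin^2\theta \neq 0$, so $\nabla_X Y \in \Gamma(\mathfrak{D}_T\oplus\{\xi\})$ iff the right-hand side vanishes, i.e. iff $g(A_{ntZ}Y,X) = -g(A_{nZ}tY,X)$.

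The main obstacle I anticipate is bookkeeping: one must be careful that every time $\phi$ is transferred across $\bar\nabla_X$ the structure vector field $\xi$ does not generate stray $\eta$-terms — but Lemma \ref{lemsub} (in particular $\bar\nabla_X\xi = 0$, $h(X,\xi)=0$, $A_\zeta\xi = 0$) kills exactly these, so the computation closes cleanly. A secondary subtlety is the order in which one pulls $\phi$ back and forth (to avoid it landing on a term where it would produce an $n\,t\,Z$ that is not obviously orthogonal to $\mathfrak{D}_T\oplus\{\xi\}$); choosing to transfer $\phi$ twice — once to replace $\phi Y$ by $tY$, once to convert $tZ$ back into $t^2 Z + ntZ$ — is the cleanest route and mirrors Lemma \ref{semi-lem2}.
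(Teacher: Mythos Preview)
Your approach is essentially identical to the paper's: both start from $g(\nabla_X Y,Z)=g(\bar\nabla_X Y,Z)$, pass $\phi$ across $\bar\nabla_X$ using \eqref{pcmdef}, decompose $\phi Z=tZ+nZ$, iterate once more to produce $t^2Z$ and $ntZ$, and apply \eqref{semi-tsqr} to obtain
\[
\sin^2\theta\, g(\nabla_X Y,Z)=g(h(X,Y),ntZ)+g(h(X,tY),nZ),
\]
which is exactly the paper's \eqref{semi-lem3_3}. One small correction to your bookkeeping: in the two places where you write $g(\bar\nabla_X tY,nZ)=-g(A_{nZ}tY,X)$ and $g(\bar\nabla_X Y,ntZ)=-g(A_{ntZ}Y,X)$ ``via Weingarten,'' the vectors $tY$ and $Y$ are \emph{tangent}, so it is the Gauss formula \eqref{gauss} together with \eqref{shp2form} that applies, and both signs should be~$+$; this flips the overall sign of your displayed identity but leaves the conclusion (vanishing of the sum) unchanged.
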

\begin{proof}
For any $X, Y \in \Gamma(\mathfrak{D}_{T}\oplus \{\xi\})$ and $Z \in \Gamma(\mathfrak{D}_{\theta})$, we have from Gauss formula that $g(\nabla_{X}Y, Z) =  g(\bar{\nabla}_{X}Y, Z)$. Employing Eqs. \eqref{metric}, \eqref{pcmdef}, \eqref{shp2form}-\eqref{phin} and Lemma \ref{lemsub} in above expression, we obtain that
\begin{align}\label{semi-lem3_1}
g(\nabla_{X}Y, Z)= g(\bar\nabla_{X}Y, t^{2}Z)+g(h(X,Y), ntZ)+g(h(X, tY), nZ).
\end{align}
Using Eq. \eqref{semi-tsqr} in equation Eq. \eqref{semi-lem3_1}, we arrive at 
\begin{align}\label{semi-lem3_2}
g(\nabla_{X}Y,Z)= \cos^{2}(\theta)g(\nabla_{X}Y, Z)+g(h(X,Y), ntZ)+g(h(X,tY), nZ).
\end{align}
From above equation, we conclude that
\begin{align}\label{semi-lem3_3}
\sin^{2}\theta g(\nabla_{X}Y, Z)=g(h(X,Y), ntZ)+g(h(X, tY), nZ) 
\end{align} 
Thus, from \eqref{semi-lem3_3}, we deduce that $\nabla_{X}Y \in \Gamma(\mathfrak{D}_{T})$ if and only if  $g(h(X,Y), ntZ)+g(h(X, tY), nZ)=0$. Since, $M$ is a proper pointwise semi-slant submanifold and $X, Y, Z$ are non-null vector fields. This completes the proof of the lemma.
\end{proof}

\begin{lemma}\label{semi-lem4}
If $M$ is a proper pointwise semi-slant submanifold of a Lorentzian paracosymplectic manifold $\bar{M}^{2m+1}$ with $\xi \in  \Gamma(TM)$. Then the pointwise slant distribution $\mathfrak{D}_{\theta}$ is involutive if and only if the metric $g$ on $M$ satisfies $g(A_{nW}Z-A_{nZ}W, t X)=g(A_{ntZ}W-A_{ntW}Z, X)$, for any $X \in \Gamma(\mathfrak{D}_{T})$ and $Z,W \in \Gamma(\mathfrak{D}_{\theta})$.
\end{lemma}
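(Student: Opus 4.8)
The plan is to reduce involutivity of $\mathfrak{D}_{\theta}$ to the vanishing of the $\mathfrak{D}_{T}$-component of brackets of sections of $\mathfrak{D}_{\theta}$. First I would check that the $\{\xi\}$-component of $[Z,W]$ vanishes automatically for $Z,W\in\Gamma(\mathfrak{D}_{\theta})$: since $\nabla$ is metric, $g(\nabla_{Z}W,\xi)=Z\,g(W,\xi)-g(W,\nabla_{Z}\xi)$, and this is $0$ because $\nabla_{Z}\xi=0$ by Lemma \ref{lemsub} while $g(W,\xi)=\eta(W)=0$; likewise $g(\nabla_{W}Z,\xi)=0$, so $g([Z,W],\xi)=0$ identically. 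Hence, by the orthogonal decomposition $TM=\mathfrak{D}_{T}\oplus\mathfrak{D}_{\theta}\oplus\{\xi\}$, the distribution $\mathfrak{D}_{\theta}$ is involutive if and only if $g([Z,W],X)=0$ for all $X\in\Gamma(\mathfrak{D}_{T})$ and $Z,W\in\Gamma(\mathfrak{D}_{\theta})$.

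The computational heart is the evaluation of $g(\bar{\nabla}_{Z}W,X)$. Using \eqref{metric} with $\eta(X)=0$ I would write $g(\bar{\nabla}_{Z}W,X)=g(\phi\bar{\nabla}_{Z}W,\phi X)$, then pull $\phi$ through $\bar{\nabla}$ via $\bar{\nabla}\Phi=0$, i.e. $(\bar{\nabla}_{Z}\phi)=0$, which follows from \eqref{pcmdef} together with \eqref{nabPHI}, to get $g(\bar{\nabla}_{Z}(\phi W),\phi X)$. Since $\mathfrak{D}_{T}$ is $\phi$-invariant, $\phi X=tX$ and $t^{2}X=X$ (the latter from \eqref{phieta} as $\eta(X)=0$). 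Splitting $\phi W=tW+nW$ by \eqref{phix} and applying Weingarten \eqref{weingarten} with \eqref{shp2form} disposes of the normal summand as $-g(A_{nW}Z,tX)$; for the tangential summand $g(\bar{\nabla}_{Z}(tW),tX)$ I would run the very same argument once more, now using $\eta(tX)=0$, the identity $\phi(tX)=X$, and $\phi(tW)=t^{2}W+ntW=\cos^{2}\theta\,W+ntW$ from \eqref{semi-tsqr}, the term in $Z(\cos^{2}\theta)$ dropping out because $g(W,X)=0$. Collecting terms and isolating $g(\bar{\nabla}_{Z}W,X)$ yields
\[
\sin^{2}\theta\;g(\bar{\nabla}_{Z}W,X)=-\,g(A_{ntW}Z,X)-g(A_{nW}Z,tX).
\]
Interchanging $Z$ and $W$ and subtracting gives
\[
\sin^{2}\theta\;g([Z,W],X)=g(A_{ntZ}W-A_{ntW}Z,X)+g(A_{nZ}W-A_{nW}Z,tX),
\]
so, $M$ being proper pointwise semi-slant and $X,Z,W$ non-null, dividing by $\sin^{2}\theta$ and invoking the first paragraph gives exactly that $\mathfrak{D}_{\theta}$ is involutive if and only if $g(A_{nW}Z-A_{nZ}W,tX)=g(A_{ntZ}W-A_{ntW}Z,X)$.

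The step I expect to need the most care is the iterated use of \eqref{metric}: one must keep scrupulous track of which pieces are tangential and which are normal, and verify that every stray $\eta$-factor (such as $\eta(tX)$ or $\eta(\bar{\nabla}_{Z}W)\,\eta(X)$) and every $g(W,X)$-type cross term genuinely vanishes, so that only the shape-operator contributions survive. The reduction in the first paragraph — that only the $\mathfrak{D}_{T}$-component of $[Z,W]$ needs to be tested, the $\{\xi\}$-part being killed by Lemma \ref{lemsub} — is routine but worth stating explicitly; beyond this the argument runs in close parallel to Lemmas \ref{semi-lem2} and \ref{semi-lem3}.
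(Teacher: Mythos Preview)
Your proposal is correct and follows essentially the same approach the paper indicates: the paper's own proof is the single line ``The proof of this lemma can be achieved by following same steps as used in proving Lemma \ref{semi-lem2},'' and your argument is precisely a careful execution of that template, with the added (and welcome) explicit verification that the $\{\xi\}$-component of $[Z,W]$ vanishes via Lemma \ref{lemsub}. Your key identity $\sin^{2}\theta\,g(\bar{\nabla}_{Z}W,X)=-g(A_{ntW}Z,X)-g(A_{nW}Z,tX)$ is the exact analogue of \eqref{semi-lem3_3}, and its antisymmetrization yields the stated criterion.
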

\begin{proof}
The proof of this lemma can be achieved by following same steps as used in proving Lemma \ref{semi-lem2}.
\end{proof}

\begin{lemma}\label{semi-lem5}
If $M$ is a proper pointwise semi-slant submanifold of a Lorentzian paracosymplectic manifold $\bar{M}^{2m+1}$ such that $\xi \in  \Gamma(TM)$. Then the pointwise slant distribution $\mathfrak{D}_{\theta}$ defines a totally geodesic foliation if and only if  the metric $g$ on $M$ satisfies $g(A_{nW}tX, Z)=-g(A_{ntW}X, Z)$, for any $X \in \Gamma(\mathfrak{D}_{T})$ and $Z,W \in \Gamma(\mathfrak{D}_{\theta})$.
\end{lemma}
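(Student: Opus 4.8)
The plan is to mimic the computation in the proof of Lemma~\ref{semi-lem3}, but now to test the $\mathfrak{D}_{T}$-component of $\nabla_{Z}W$ for $Z,W\in\Gamma(\mathfrak{D}_{\theta})$. First I would observe that $\mathfrak{D}_{\theta}$ defines a totally geodesic foliation exactly when $\nabla_{Z}W\in\Gamma(\mathfrak{D}_{\theta})$ for all $Z,W\in\Gamma(\mathfrak{D}_{\theta})$; since $TM=\mathfrak{D}_{T}\oplus\mathfrak{D}_{\theta}\oplus\{\xi\}$ is an orthogonal decomposition, this is equivalent to the two conditions $g(\nabla_{Z}W,\xi)=0$ and $g(\nabla_{Z}W,X)=0$ for every $X\in\Gamma(\mathfrak{D}_{T})$. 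The first holds automatically, because $g(\nabla_{Z}W,\xi)=Zg(W,\xi)-g(W,\nabla_{Z}\xi)=0$ by Lemma~\ref{lemsub}. Hence the lemma reduces entirely to analyzing $g(\nabla_{Z}W,X)$.

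For that quantity I would start from $g(\nabla_{Z}W,X)=g(\bar\nabla_{Z}W,X)$ by the Gauss formula \eqref{gauss}, then use \eqref{metric} together with $\eta(X)=0$ (as $X\in\Gamma(\mathfrak{D}_{T})\perp\xi$) to rewrite it as $g(\phi\bar\nabla_{Z}W,\phi X)$. The paracosymplectic condition \eqref{pcmdef}--\eqref{nabPHI} gives $\phi\bar\nabla_{Z}W=\bar\nabla_{Z}(\phi W)$; splitting $\phi W=tW+nW$ by \eqref{phix} and using invariance of $\mathfrak{D}_{T}$ (so $nX=0$ and $\phi X=tX$) yields $g(\nabla_{Z}W,X)=g(\bar\nabla_{Z}(tW),tX)+g(\bar\nabla_{Z}(nW),tX)$.

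Next I would process the two terms. For $g(\bar\nabla_{Z}(nW),tX)$, apply the Weingarten formula \eqref{weingarten} and self-adjointness \eqref{shp2form} of the shape operator to obtain $-g(A_{nW}tX,Z)$. For $g(\bar\nabla_{Z}(tW),tX)$, use \eqref{symphi} with $tX=\phi X$ to pass to $g(\phi\bar\nabla_{Z}(tW),X)$, invoke the paracosymplectic condition again, write $\phi(tW)=t^{2}W+n(tW)$, and substitute $t^{2}W=\cos^{2}\theta\,W$ from \eqref{semi-tsqr}; differentiating $\cos^{2}\theta\,W$ produces a term proportional to $g(W,X)=0$, the term $\cos^{2}\theta\,g(\nabla_{Z}W,X)$, and (again via \eqref{weingarten} and \eqref{shp2form}) the term $-g(A_{ntW}X,Z)$. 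Collecting everything gives $\sin^{2}\theta\,g(\nabla_{Z}W,X)=-g(A_{nW}tX,Z)-g(A_{ntW}X,Z)$.

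Finally, since $M$ is a proper pointwise semi-slant submanifold and the vector fields are non-null, $\sin^{2}\theta\neq 0$, so $g(\nabla_{Z}W,X)=0$ for all such $X,Z,W$ if and only if $g(A_{nW}tX,Z)=-g(A_{ntW}X,Z)$; together with the automatic vanishing of the $\xi$-component this is exactly the asserted characterization. I do not expect a genuine obstacle; the step requiring the most care is the careful tracking of tangential versus normal parts each time $\phi$ is slid through $\bar\nabla$ — in particular the systematic use of $\phi X=tX$, $nX=0$ on $\mathfrak{D}_{T}$, of \eqref{semi-tsqr} on $\mathfrak{D}_{\theta}$, and keeping in mind that the stray derivative term $Z(\cos^{2}\theta)$ harmlessly drops out against $g(W,X)=0$.
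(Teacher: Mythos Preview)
Your argument is correct and is precisely the adaptation of the Lemma~\ref{semi-lem3} computation that the paper intends: one arrives at $\sin^{2}\theta\,g(\nabla_{Z}W,X)=-g(A_{nW}tX,Z)-g(A_{ntW}X,Z)$ and concludes by properness. Your explicit verification that the $\xi$-component of $\nabla_{Z}W$ vanishes via Lemma~\ref{lemsub} is a detail the paper leaves implicit, but otherwise the approaches coincide.
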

\begin{proof}
The proof of the lemma follow same steps as in Lemma \ref{semi-lem3}.
\end{proof}

\section{Pointwise semi-slant warped product submanifolds}\label{Psswp}
\noindent  In this section, we first define pointwise semi-slant warped product submanifolds $M$, and then examine the existence or non existence results and also derive characterization theorem of such submanifolds in a Lorentzian paracosymplectic manifold $\bar{M}^{2m+1}$ with the structure vector field $\xi$ tangent to $M$.
\begin{definition}
A pointwise semi-slant submanifold $M$ of a Lorentzian almost paracontact manifold $\bar{M}^{2m+1}(\phi,\xi,\eta, g)$ is called a {\it pointwise semi-slant warped product} if it is a warped product of the form: $M_{T}\times_{f} M_{\theta}$ or $M_{\theta}\times_{f}M_{T}$,  where $M_{T}$ (resp., $M_{\theta}$) is invariant (resp., pointwise proper slant) integral submanifolds of $\mathfrak{D}_{T}$ (resp., $\mathfrak{D}_{\theta}$) on $M$ and $f$ is a non-constant positive smooth function on the first factor. 
If the warping function $f$ is constant then a pointwise semi-slant warped product submanifold is said to be a {\it pointwise semi-slant product or trivial product}.
\end{definition}
\noindent From the direct consequence of Theorem \ref{thmwp}, we have the following results for warped product submanifolds when $\xi$ is tangent to second factor;
\begin{proposition}
There doesn't exist a non-trivial pointwise semi-slant warped product submanifold of the form $M=M_{T}\times_{f} M_{\theta}$ of a Lorentzian paracosymplectic manifold $\bar{M}^{2m+1}$ such that the structure vector $\xi$ is tangent to $M_{\theta}$.
\end{proposition}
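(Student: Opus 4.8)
The plan is to recognize that a pointwise semi-slant warped product $M = M_{T}\times_{f}M_{\theta}$ is, in particular, a warped product submanifold $B\times_{f}F$ of the Lorentzian paracosymplectic manifold $\bar{M}^{2m+1}$ with base $B = M_{T}$ and fibre $F = M_{\theta}$, and that by the very definition of such a warped product the warping function $f$ is a non-constant positive smooth function on the first factor $M_{T}$, so that $M$ is non-trivial. Under the standing hypothesis the structure vector $\xi$ is tangent to $M_{\theta}$, that is, $\xi \in \Gamma(TF)$. Thus $M$ would be precisely the kind of object ruled out by Theorem \ref{thmwp}, and we obtain a contradiction; hence no such $M$ exists.

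For completeness I would also record the short self-contained argument underlying this corollary. Take any non-null $X \in \Gamma(TM_{T})$. Since $\xi \in \Gamma(TM_{\theta})$, Proposition \ref{propmain}(ii) applied to the warped product $M_{T}\times_{f}M_{\theta}$ gives $\nabla_{X}\xi = (X(\ln f))\,\xi$. On the other hand, Lemma \ref{lemsub} gives $\nabla_{X}\xi = 0$ for every $X \in \Gamma(TM)$. Comparing the two expressions yields $(X(\ln f))\,\xi = 0$; since $g(\xi,\xi) = -1$, the field $\xi$ is nowhere vanishing, so $X(\ln f) = 0$ for every $X \in \Gamma(TM_{T})$. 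As $f$ is a function on the first factor $M_{T}$, this forces $f$ to be constant, contradicting the non-triviality built into the definition of a pointwise semi-slant warped product.

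Since the deduction is immediate from Theorem \ref{thmwp} (or equivalently from Lemma \ref{lemsub} together with Proposition \ref{propmain}), there is essentially no real obstacle here; the only point requiring a little care is that one must invoke that $\xi$ is a timelike unit, hence non-null and nowhere-vanishing, vector field in order to cancel it from $(X(\ln f))\,\xi = 0$ — and this is exactly the place where the Lorentzian paracosymplectic hypothesis enters, through the identity $\bar{\nabla}_{X}\xi = 0$ of Lemma \ref{lemnabxi}.
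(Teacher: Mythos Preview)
Your proposal is correct and follows exactly the paper's own approach: the proposition is stated there as a direct consequence of Theorem \ref{thmwp}, and your self-contained argument via Lemma \ref{lemsub} and Proposition \ref{propmain}(ii) is precisely the content of that theorem's proof specialized to $F=M_{\theta}$. No further work is needed.
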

\begin{proposition}
There doesn't exist a non-trivial pointwise semi-slant warped product submanifold of the form $M=M_{\theta}\times_{f} M_{T}$ of a Lorentzian paracosymplectic manifold $\bar{M}^{2m+1}$ such that the structure vector $\xi$ is tangent to $M_{T}$
\end{proposition}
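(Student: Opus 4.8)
The plan is to reduce the statement directly to Theorem \ref{thmwp}. Writing the warped product $M=M_{\theta}\times_{f}M_{T}$ in the generic notation $M=B\times_{f}F$ of Subsection \ref{Wpsub}, we have $B=M_{\theta}$ and $F=M_{T}$, and the hypothesis that $\xi$ is tangent to $M_{T}$ reads $\xi\in\Gamma(TF)$. This is exactly the configuration excluded by Theorem \ref{thmwp}, so the proposition follows at once; the proof amounts to unwinding that reduction.

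Concretely, I would argue as follows. Since $\xi\in\Gamma(TM_{T})$, Lemma \ref{lemsub} gives $\nabla_{X}\xi=0$ for every $X\in\Gamma(TM)$; take a non-null vector field $X\in\Gamma(TM_{\theta})=\Gamma(TB)$. On the other hand, part (ii) of Proposition \ref{propmain} applied to $X\in\Gamma(TB)$ and $\xi\in\Gamma(TM_{T})=\Gamma(TF)$ yields $\nabla_{X}\xi=(X(\ln f))\xi$. Comparing the two expressions forces $(X(\ln f))\xi=0$, and since $\xi$ is a timelike unit (hence non-null) vector field, $X(\ln f)=0$ for all $X\in\Gamma(TM_{\theta})$. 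As $\ln f$ is a function on the base $M_{\theta}$ only, this means $\nabla(\ln f)=0$, so $f$ is constant; this contradicts the assumption that the pointwise semi-slant warped product is non-trivial. Hence no such submanifold exists.

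I do not expect a genuine obstacle here: the phenomenon is that the obstruction to $\xi$ being tangent to the fibre factor of a warped product is already unconditional in a Lorentzian paracosymplectic ambient, so none of the pointwise semi-slant data (the invariance of $\mathfrak{D}_{T}$, the slant function $\theta$, the characterization of Proposition \ref{chara}) enters. The only mild point to keep in mind is the standing non-degeneracy assumption on $g$ from Subsection \ref{sub}, which is what guarantees that the non-null test vector field $X\in\Gamma(TM_{\theta})$ used above can be chosen; with that in place the argument is immediate, and the same reasoning also covers the companion Proposition with $M=M_{T}\times_{f}M_{\theta}$ and $\xi$ tangent to $M_{\theta}$.
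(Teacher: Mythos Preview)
Your proposal is correct and follows essentially the same route as the paper: the proposition is stated there as a direct consequence of Theorem~\ref{thmwp}, and your argument simply unwinds that theorem (via Lemma~\ref{lemsub} and Proposition~\ref{propmain}(ii)) in the special case $B=M_{\theta}$, $F=M_{T}$, $Z=\xi$.
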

\noindent Now, we prove an important results for warped product submanifolds when $\xi$ is tangent to first factor;
\begin{theorem}\label{thmwp1}
Let $\bar{M}^{2m+1}$ be a Lorentzian paracosymplectic manifold. Then there does not exist non-trivial pointwise semi-slant warped product submanifold $M=M_{\theta}\times_{f} M_{T}$ of $\bar{M}^{2m+1}$ such that $\xi$ is tangent to $M_{\theta}$.
\end{theorem}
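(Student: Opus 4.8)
The plan is to show that the warping function $f$ must be constant by analyzing how $\phi$ interacts with the second fundamental form and the warped product connection. Since $M = M_\theta \times_f M_T$ with $\xi$ tangent to $M_\theta$, I would take $X, Y \in \Gamma(TM_T)$ (so these lie in $\mathfrak{D}_T$, hence $\phi X = tX \in \Gamma(TM_T)$ as well) and $Z \in \Gamma(TM_\theta)$. First I would compute $g(\bar\nabla_Z \phi X, \phi Y)$ in two ways. On one hand, since $\bar\nabla \Phi = 0$ (Eq.~\eqref{pcmdef}), we have $g(\bar\nabla_Z \phi X, \phi Y) = g(\phi \bar\nabla_Z X, \phi Y) = g(\bar\nabla_Z X, Y) + \eta(\bar\nabla_Z X)\eta(Y)$ by \eqref{metric}; the last term vanishes because $Y \in \Gamma(\mathfrak{D}_T)$ is orthogonal to $\xi$. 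Using Proposition~\ref{propmain}(ii) with $Z$ a vector field on the base $M_\theta$ and $X$ on the fiber $M_T$, this gives $g(\bar\nabla_Z X, Y) = (Z\ln f)\, g(X,Y)$.

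On the other hand I would compute $g(\bar\nabla_Z \phi X, \phi Y)$ directly. Since $\phi X \in \Gamma(TM_T)$ is again a fiber vector field and $\phi Y \in \Gamma(TM_T)$, Proposition~\ref{propmain}(ii) gives $\nabla_Z(\phi X) = (Z\ln f)\,\phi X$, so $g(\bar\nabla_Z \phi X, \phi Y) = g(\nabla_Z \phi X, \phi Y) = (Z\ln f)\, g(\phi X, \phi Y) = (Z\ln f)\bigl(g(X,Y)+\eta(X)\eta(Y)\bigr) = (Z\ln f)\,g(X,Y)$, again using that $X \perp \xi$. Comparing the two expressions yields an identity that is automatically satisfied, so this particular pairing gives no information; I would instead need to exploit the slant structure of $\mathfrak{D}_\theta$. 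The productive computation is to pair against a $\mathfrak{D}_\theta$-direction: compute $g(\bar\nabla_Z X, W)$ for $W \in \Gamma(\mathfrak{D}_\theta)$ via $\bar\nabla\Phi = 0$, writing $g(\bar\nabla_Z X, W) = g(\phi\bar\nabla_Z X,\phi W) - \eta(\bar\nabla_Z X)\eta(W) = g(\bar\nabla_Z \phi X, \phi W)$ and then decomposing $\phi W = tW + nW$ with $tW \in \Gamma(\mathfrak{D}_\theta)$, $nW \in \Gamma(TM^\perp)$, using Gauss--Weingarten \eqref{gauss}--\eqref{weingarten} and the shape operator relation \eqref{shp2form}.

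The key step, and the main obstacle, is to combine these identities so that the term $(Z\ln f)$ gets multiplied by $\sin^2\theta$ (arising from $t^2 = \cos^2\theta(I+\eta\otimes\xi)$, Eq.~\eqref{semi-tsqr}, applied on $\mathfrak{D}_\theta$) on one side while the other side collapses. Concretely, by taking $W = \phi X$-type slant pairings one should reach an equation of the form $\sin^2\theta\,(Z\ln f)\,g(X,Y) = 0$ or $\sin^2\theta\,(X\ln f) = 0$ for appropriate choices; since $M_\theta$ is \emph{pointwise proper slant} it contains no totally real points, so $\sin^2\theta$ vanishes nowhere, forcing $X\ln f = 0$ for all $X \in \Gamma(TM_T)$. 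Because $f$ is a function on the base $M_\theta$ and $M_T$ is the fiber, $X\ln f = 0$ is then combined with Lemma~\ref{wplem} (giving $\xi\ln f = 0$) to conclude that $\nabla f$ is orthogonal to all of $TM$, hence $f$ is constant, contradicting non-triviality. I would also invoke Lemma~\ref{lemsub} throughout to kill the $\bar\nabla\xi$ and $h(\cdot,\xi)$ terms that appear when $\xi \in \Gamma(TM_\theta)$ enters the decompositions. The delicate bookkeeping is ensuring that the normal components $nZ$, $ntZ$ and the shape operators $A_{nZ}$ do not leave an uncancelled residue; this is where mixed-geodesic-type arguments or a second application of $\bar\nabla\Phi = 0$ to $g(\bar\nabla_Z X, tW)$ versus $g(\bar\nabla_Z tX, W)$ will be needed to close the estimate.
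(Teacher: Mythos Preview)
There are two genuine gaps in your plan.

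First, you have the meaning of ``pointwise proper slant'' inverted. By the paper's definition a totally real point is one where $\cos\theta=0$; hence ``proper slant'' guarantees $\cos\theta\neq 0$ everywhere, \emph{not} $\sin\theta\neq 0$. So even if you did reach an identity $\sin^{2}\theta\,(Z\ln f)\,g(X,Y)=0$, the hypothesis would not let you cancel the $\sin^{2}\theta$ factor. The paper's proof is built precisely to produce a $\cos^{2}\theta$ factor instead: it manipulates $g(\nabla_{X}Z,Y)$ (differentiating along the fiber direction $X\in\Gamma(M_T)$, not along the base direction $Z$ as you do), extracts a symmetry relation $g(h(X,\phi Y),nZ)=g(h(\phi X,Y),nZ)$, rewrites $g(h(X,\phi Y),nZ)=-g(\nabla_{X}Z,Y)+g(\nabla_{X}tZ,\phi Y)$, and finally, after substituting $Z\mapsto tZ$ and $X\mapsto\phi X$, arrives at $\cos^{2}\theta\,Z(\ln f)\,g(X,Y)=0$. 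The properness of the slant then kills $Z(\ln f)$.

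Second, your endgame is confused about which derivative must vanish. In $M=M_{\theta}\times_{f}M_{T}$ the warping function $f$ lives on the base $M_{\theta}$, so $X(\ln f)=0$ for $X\in\Gamma(TM_{T})$ is automatic and carries no information; combining it with $\xi(\ln f)=0$ from Lemma~\ref{wplem} still leaves $f$ free to vary in the $\mathfrak{D}_{\theta}$-directions of $M_{\theta}$. What must be shown is $Z(\ln f)=0$ for every $Z\in\Gamma(\mathfrak{D}_{\theta})$, and your outline never isolates such a term. To fix the argument, drop the $\nabla_{Z}$-based computations (which, as you noticed, collapse to tautologies on the invariant fiber) and instead follow the paper's route of computing $g(\nabla_{X}Z,Y)$ and iterating $\phi$ on $Z$ to pull out $t^{2}Z=\cos^{2}\theta\,Z$ via \eqref{semi-tsqr}.
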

\begin{proof}
We have from Eq. \eqref{gauss}, that $g(\nabla_{X}Z,Y)=g(\bar\nabla_{X}Z, Y)$, for any $X, Y \in \Gamma(M_{T})$ and $Z \in \Gamma(M_{\theta})$. Employing Eqs. \eqref{metric}, \eqref{symphi}, \eqref{pcmdef}, \eqref{phix} and Lemma \ref{lemsub} in right hand side of above expression, we obtain that
\begin{align}\label{thmwp1-1}
g(\nabla_{X}Z,Y)=g(\bar\nabla_{X}t^{2} Z, Y)+g(\bar\nabla_{X}ntZ, Y)+g(\bar\nabla_{X}nZ, \phi Y).               
\end{align}
Using Eqs. \eqref{weingarten}, \eqref{shp2form}, \eqref{semi-tsqr} and the fact $g(Z, Y)=0$ in Eq. \eqref{thmwp1-1}, we obtain that
\begin{align}\label{thmwp1-2}
g(\nabla_{X}Z,Y)= \cos^2\theta g(\bar\nabla_{X}Z,Y)-g(h(X,Y), ntZ)-g(h(X,\phi Y), nZ). 
\end{align}
Applying Eqs.\eqref{gauss} and \eqref{thmwp1-2} in above equation, we conclude that
\begin{align}\label{thmwp1-3}
\sin^2\theta g(\nabla_{X}Z,Y)= -g(h(X,Y), ntZ)-g(h(X,\phi Y), nZ). 
\end{align} 
Interchanging $X$ and $Y$ in Eq. \eqref{thmwp1-3}, we get
\begin{align}\label{thmwp1-4}
\sin^2\theta g(\nabla_{Y}Z, X)= -g(h(X,Y), ntZ)-g(h(Y, \phi X), nZ).
\end{align} 
From Eqs. \eqref{thmwp1-3}, \eqref{thmwp1-4} and Proposition \ref{propmain}, we achieve that 
\begin{align}\label{thmwp1-5}
g(h(X,\phi Y), nZ)=g(h(Y, \phi X), nZ).
\end{align}
On the other hand, by the use of Eqs. \eqref{metric}, \eqref{pcmdef}-\eqref{phix} and Lemma \ref{lemsub}, we arrive at
\begin{align}\label{thmwp1-6}
g(h(X,\phi Y), nZ) = -g(\nabla_{X}Z, Y)+g(\nabla_{X}tZ, \phi Y),
\end{align}
for any $X, Y \in \Gamma(M_{T})$ and $Z \in \Gamma(M_{\theta}).$
Now, from Eq. \eqref{thmwp1-6}, we conclude that the Eq. \eqref{thmwp1-5} hold if and only if  $g(\nabla_{X}tZ, \phi Y)=0$. Moreover, by using Proposition \ref{propmain} and replacing $Z$ by $tZ$, $X$ by $\phi X$ in above expression, we derive that $ t^{2}Z(\ln{f})g(\phi X,\phi Y)=0$. Hence, previous expression in light of Eqs. \eqref{metric}, \eqref{semi-tsqr} and fact that $\eta(X)\eta(Y)=0$ reduced to, $\cos^{2}\theta Z(\ln{f})g(X,Y)=0$. Thus $f$ is constant. Since, $M_{\theta}$ is pointwise proper slant submanifold and $X, Y, Z$ are non-null vector fields.  This completes the proof of the proposition. 
\end{proof}
\begin{remark}
It is no hard to conclude that the Theorem 5.2 in \cite{SY} for $\theta$ globally constant and Theorem 4.1 in \cite{US} for $\theta=\pi/2$  can be treat as the particular cases of the Theorem \ref{thmwp1}.
\end{remark}
\noindent Next, we have an important lemma for later use
\begin{lemma}\label{semiwplem1}
If $M=M_{T}\times_{f} M_{\theta}$ is a non-trivial pointwise semi-slant warped product submanifold  in a Lorentzian paracosymplectic manifold $\bar{M}^{2m+1}$, then
\begin{itemize}
\item[$(a)$] $g(h(X,Z), ntW) = - tX(\ln f)g(tW, Z)+X (\ln f)\cos^{2}\theta g(Z, W)$,
\item[$(b)$] $g(h(tX, Z), nW) = -X(\ln f)g(W, Z)+tX (\ln f)g(Z, tW)$,
\item[$(c)$] $g(h(X,W), ntZ) = - tX(\ln f)g(W, tZ)+X (\ln f)\cos^{2}\theta g(Z, W)$,
\item[$(d)$] $g(h(tX, W), nZ) = -X(\ln f)g(W, Z)+tX (\ln f)g(tZ, W)$,
\end{itemize}
for all $X \in \Gamma(\mathfrak{D}_{T}\oplus \{\xi\})$ and $Z, W \in \Gamma(\mathfrak{D}_{\theta})$.
\end{lemma}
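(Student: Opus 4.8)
The plan is to obtain each of (a)--(d) by evaluating one carefully chosen quantity $g(\bar\nabla_{\,\cdot}\,\cdot\,,\,\cdot)$ in two different ways and equating the results. The ingredients are: the paracosymplectic identity, which since $\bar\nabla\Phi=0$ reads $\bar\nabla_U(\phi V)=\phi(\bar\nabla_U V)$ for all $U,V$ (apply \eqref{nabPHI} and non-degeneracy of $g$ to $(\bar\nabla_U\phi)V$); the Gauss--Weingarten formulas \eqref{gauss}--\eqref{weingarten} with \eqref{shp2form}; the decompositions \eqref{phix}--\eqref{phin}, the self-adjointness \eqref{symphi}, and the symmetry $g(tZ,W)=g(Z,tW)$ of \eqref{symxty}; the warped-product covariant derivatives of Proposition \ref{propmain}, above all $\nabla_X W=\nabla_W X=(X\ln f)W$ when $X$ is tangent to $M_T$ and $W$ to $M_\theta$, together with item (iii) for two fields tangent to $M_\theta$; and the slant data on $\mathfrak D_\theta$, namely $t^2Z=\cos^2\theta\,Z$ from \eqref{semi-tsqr}, $g(tZ,tW)=\cos^2\theta\,g(Z,W)$ and $g(nZ,nW)=\sin^2\theta\,g(Z,W)$ from \eqref{semi-gtcos}--\eqref{semi-gnsin} (here $\eta|_{\mathfrak D_\theta}=0$), with the immediate consequences $\phi(tW)=\cos^2\theta\,W+n(tW)$ and $\phi(nW)=\sin^2\theta\,W-n(tW)$. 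First I would dispose of the case $X=\xi$: there $t\xi=\phi\xi=0$, $h(\xi,\cdot)=0$ by Lemma \ref{lemsub}, and $\xi(\ln f)=0$ by Lemma \ref{wplem}, so each of (a)--(d) reduces to $0=0$. Hence from now on $X\in\Gamma(\mathfrak D_T)$, for which $\phi X=tX\in\Gamma(\mathfrak D_T)$ and $nX=0$ --- the point being that $\phi X$ then stays tangent to the first factor.

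For (a) I would evaluate $g(\bar\nabla_Z(tW),tX)$, with $X\in\Gamma(\mathfrak D_T)$ and $Z,W\in\Gamma(\mathfrak D_\theta)$, in two ways. Directly, by \eqref{gauss} and Proposition \ref{propmain}(iii) (the pair $Z,tW$ is tangent to $M_\theta$, $h$ is normal, and only the gradient term meets $tX$), it equals $-(tX)(\ln f)\,g(Z,tW)$. On the other side, $tX=\phi X$, so \eqref{symphi} and the paracosymplectic identity turn it into $g(\bar\nabla_Z(\phi(tW)),X)$; substituting $\phi(tW)=\cos^2\theta\,W+n(tW)$ and using \eqref{gauss}, \eqref{weingarten}, \eqref{shp2form}, Proposition \ref{propmain} and $g(W,X)=0$, this becomes $-\cos^2\theta\,X(\ln f)\,g(Z,W)-g(h(X,Z),n(tW))$. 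Equating the two yields $g(h(X,Z),n(tW))$ in the claimed form. For (b) I would take the normal component of $\bar\nabla_Z(\phi X)=\phi(\bar\nabla_Z X)$: since $\phi X=tX$ has no normal part, equating $TM^\perp$-parts and using $\nabla_Z X=(X\ln f)Z$ gives $h(Z,tX)=(X\ln f)\,nZ+n'h(X,Z)$; pairing with $nW$, the first term contributes $(X\ln f)\sin^2\theta\,g(Z,W)$ and the second equals $g(h(X,Z),\phi(nW))=-g(h(X,Z),n(tW))$, into which (a) is substituted. Finally (c) and (d) are (a) and (b) with $Z$ and $W$ interchanged (equivalently, one can derive them from the antisymmetry $g(h(X,Z),nV)=-g(h(X,V),nZ)$, which comes from the tangential component $A_{nZ}X=-t'h(X,Z)$ of the paracosymplectic identity, where the warping terms cancel).

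The difficulty is clerical, not conceptual. Every step asks one to split $\phi$ of a tangent vector as $t+n$ and $\phi$ of a normal vector as $t'+n'$, to move $\phi$ across $g$ by \eqref{symphi}, and --- at each use of Proposition \ref{propmain} --- to keep careful track of which factor $M_T$ or $M_\theta$ a given field is tangent to; this is exactly why reducing to $X\in\Gamma(\mathfrak D_T)$ (so $\phi X=tX$ remains tangent to $M_T$) is done at the outset and why the $\xi$-part is handled separately through Lemmas \ref{lemsub}--\ref{wplem}. No individual computation is hard; the one real hazard is an accumulated sign error, which is easily caught by testing each identity against the anti-invariant case $\theta\equiv\pi/2$ and the slant case $\theta\equiv\mathrm{const}$.
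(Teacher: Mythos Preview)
Your route differs from the paper's. The paper first proves an antisymmetry
\[
g(h(X,W),nZ)=-g(h(X,Z),nW)\qquad\bigl(X\in\Gamma(\mathfrak D_T\oplus\{\xi\}),\ Z,W\in\Gamma(\mathfrak D_\theta)\bigr),
\]
by expanding $g(h(X,W),nZ)$ via $\bar\nabla_X W$; it then substitutes $W\mapsto tW$ and combines this with a direct evaluation of $g(h(tW,X),nZ)$ through $\bar\nabla_{tW}X$ to obtain (a). Formula (b) comes from an intermediate identity by the replacement $X\mapsto\phi X$, and (c),(d) from the antisymmetry again with $Z\mapsto tZ$. You bypass the antisymmetry altogether, extracting (a) from the single scalar $g(\bar\nabla_Z(tW),tX)$ and (b) from the normal component of the paracosymplectic identity; that is a legitimate and in some ways tidier strategy.

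There is, however, a sign problem in the execution. Equating your ``Way~1'' and ``Way~2'' exactly as you wrote them gives
\[
g(h(X,Z),ntW)\;=\;(tX)(\ln f)\,g(Z,tW)\;-\;\cos^2\theta\,X(\ln f)\,g(Z,W),
\]
which is the \emph{negative} of the stated (a). Feeding this into your (b)-step, the identity $h(Z,tX)=(X\ln f)\,nZ+n'h(X,Z)$ together with $g(n'h(X,Z),nW)=-g(h(X,Z),ntW)$ then produces $X(\ln f)\,g(Z,W)-(tX)(\ln f)\,g(Z,tW)$, again the negative of (b); and if instead you substitute the paper's (a) into your (b)-derivation you get $(\sin^2\theta-\cos^2\theta)X(\ln f)\,g(Z,W)+(tX)(\ln f)\,g(Z,tW)$, which matches neither. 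So your scheme is internally consistent but delivers all four formulas with the opposite overall sign to what is printed. You should re-check the equating step in (a) carefully and compare it against the paper's derivation through the antisymmetry identity above; one of the two computations carries a sign slip, and your claim that ``equating the two yields \dots\ in the claimed form'' does not hold as written.
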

\begin{proof}
From Eqs. \eqref{symphi}, \eqref{pcmdef}, \eqref{phix} and Gauss formulas, we attain that
\begin{align}\label{semiwplem1_1}
g(h(X, W), nZ) = g(\nabla_{X}tW,  Z)+g(\bar\nabla_{X}nW, Z)-g(\nabla_{X}W,  tZ).
\end{align} 
Employing  Proposition \ref{propmain} and Eq. \eqref{symxty} in Eq. \eqref{semiwplem1_1}, we arrive at $$g(h(X, W), nZ)=g(\bar\nabla_{X}nW,  Z).$$ By using Eq. \eqref{weingarten} in right hand side of previous expression, we derive that
\begin{align}\label{semiwplem1_1-1}
g(A_{nZ}W, X) =-g(A_{nW}Z, X).
\end{align} 
Moreover, Eq. \eqref{semiwplem1_1-1}, by replacing $W$ by $tW$ and Eq. \eqref{weingarten} becomes $$g(h(tW, X), nZ) = -g(h(Z, X), ntW).$$ Applying Eqs. \eqref{gauss}, \eqref{weingarten}, \eqref{phix} and the fact that structure is Lorentzian paracosymplectic in above expression, we obtain that
\begin{align}\label{semiwplem1_2}
g(h(Z, X), ntW)= -g(\nabla_{tW}tX,  Z)+g(\nabla_{tW}X,  tZ).
\end{align}
Using Proposition \ref{propmain}, Eq. \eqref{semi-gtcos} and the fact that $\xi$ is orthogonal to $Z, W$ in Eq. \eqref{semiwplem1_2}, we achieve the formula-$(a)$. Thus, replacing $W$ by $tW$ and using Eq. \eqref{semi-tsqr} in \eqref{semiwplem1_2}, we get
\begin{align}\label{semiwplem1_3}
g(h(Z, X), nW) = -tX(\ln f)g(W, Z)+X (\ln f)g(Z, tW).
\end{align}
Now for formula-$(b)$, we first replace $X$ by $\phi X$ in Eq. \eqref{semiwplem1_3}, and then in light of Eqs. \eqref{metric}, \eqref{symphi}, \eqref{pcmdef}, \eqref{shp2form} and fact that $\eta(Z)=0$ we achieve the desired. 
On the other hand,  Using Eq. \eqref{weingarten} and interchanging $Z$ by $tZ$ in Eq. \eqref{semiwplem1_1-1}, we deduce that
\begin{align}\label{semiwplem1_4}
g(h(W, X), ntZ) = -g(h(tZ, X), nW)
\end{align}
Hence, formula-$(c)$ and formula-$(d)$ can be attained with the help of Eq. \eqref{semiwplem1_4} and by following similar steps as used to prove formula-$(a)$ and formula-$(b)$.  This completes the proof of lemma.  
\end{proof}
\noindent Now, we prove an important result as the characterization for pointwise semi-slant warped product submanifold in a Lorentzian paracosymplectic manifold.
 
\begin{theorem}\label{semiwp-thm1}
Let $M\to \bar{M}^{2m+1}$ be an isometric immersion of a submanifold $M$ into a Lorentzian paracosymplectic manifold $\bar{M}^{2m+1}$. Then a necessary and sufficient condition for $M$ to be locally non-trivial pointwise semi-slant warped product submanifold $M_{T}\times_{f} M_{\theta}$ is that the shape operator of $M$ satisfies
\begin{align}\label{semi-Shapecond}
A_{ntW}X+A_{nW}tX=(cos^2{\theta}-1)X (\nu)W,
\end{align}
$\forall X \in \Gamma(\mathfrak{D}_{T}\oplus \{\xi\}), W \in \Gamma(\mathfrak{D}_{\theta})$ and for some function $\nu$ on $M$ such that $Z(\nu)=0$, $Z \in \Gamma(\mathfrak{D}_{\theta})$.
\end{theorem}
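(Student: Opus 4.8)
The plan is to prove the two implications separately, with the forward direction (necessity) being a direct consequence of Lemma \ref{semiwplem1}, and the converse (sufficiency) being the substantial part requiring a careful analysis of the second fundamental form together with a distribution-integrability argument.

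For the necessity, suppose $M = M_T \times_f M_\theta$ is a non-trivial pointwise semi-slant warped product. I would add formula-$(a)$ and formula-$(d)$ of Lemma \ref{semiwplem1}: writing $g(h(X,Z),ntW) = -tX(\ln f)g(tW,Z) + X(\ln f)\cos^2\theta\, g(Z,W)$ and $g(h(tX,W),nZ) = -X(\ln f)g(W,Z) + tX(\ln f)g(tZ,W)$, the terms $-tX(\ln f)g(tW,Z)$ and $tX(\ln f)g(tZ,W)$ cancel (using $g(tZ,W) = g(Z,tW)$ from Eq. \eqref{symxty}), leaving $g(A_{ntW}X + A_{nW}tX, Z) = (\cos^2\theta - 1)X(\ln f)\, g(Z,W)$ for all $Z \in \Gamma(\mathfrak{D}_\theta)$. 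Since $A_{ntW}X + A_{nW}tX$ should be shown to lie in $\Gamma(\mathfrak{D}_\theta)$ — this follows because $g(A_{n\,\cdot\,}X, Y) = g(h(X,Y), n\,\cdot) = 0$ for $Y \in \Gamma(\mathfrak{D}_T \oplus \{\xi\})$ by Lemma \ref{semi-lem2}/mixed-geodesic type arguments, and orthogonality to $\xi$ by Lemma \ref{lemsub} — the identity \eqref{semi-Shapecond} holds with $\nu = \ln f$; and $Z(\nu) = Z(\ln f) = 0$ because $f$ is a function on the first factor $M_T$.

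For the sufficiency, assume \eqref{semi-Shapecond} holds for some $\nu$ with $Z(\nu)=0$. The strategy is the classical one: (i) show $\mathfrak{D}_T \oplus \{\xi\}$ is integrable with totally geodesic leaves, so $M$ is locally a (metrically) direct product along that factor; (ii) show $\mathfrak{D}_\theta$ is integrable with totally umbilical leaves whose mean curvature vector is $-\nabla\nu$ (the gradient of $\nu$); (iii) invoke Hiepko's theorem (or the Bishop–O'Neill-type de Rham decomposition) to conclude $M = M_T \times_f M_\theta$ with $f = e^\nu$. For step (i), I would feed \eqref{semi-Shapecond} back into the criterion of Lemma \ref{semi-lem2} (involutivity: need $h(X,tY)=h(tX,Y)$ for $X,Y\in\Gamma(\mathfrak{D}_T\oplus\{\xi\})$) and Lemma \ref{semi-lem3} (totally geodesic foliation: need $g(h(X,Y),ntZ)+g(h(X,tY),nZ)=0$); the condition \eqref{semi-Shapecond} paired with the shape-operator relation \eqref{shp2form} and the symmetry \eqref{symxty} should yield both, using that $X(\nu)$ is symmetric in an appropriate sense only after the computation forces $g(h(X,Y),\,\cdot\,)$-terms to vanish on $\mathfrak{D}_T$. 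For step (ii), one uses Lemma \ref{semi-lem5} for the totally geodesic-foliation obstruction of $\mathfrak{D}_\theta$ to instead compute the $\mathfrak{D}_T$-component of $\nabla_Z W$: from $g(\nabla_Z W, X) = g(\bar\nabla_Z W, X)$, expand via \eqref{symphi}, \eqref{pcmdef}, \eqref{phix} and Lemma \ref{lemsub} to get an expression of the form $\sin^2\theta\, g(\nabla_Z W, X) = -g(A_{nW}tX + A_{ntW}X, Z)$-type, then substitute \eqref{semi-Shapecond} to obtain $g(\nabla_Z W, X) = -X(\nu) g(Z,W)$, i.e. the $\mathfrak{D}_T$-part of $\nabla_Z W$ equals $-g(Z,W)\,\nabla\nu$, which is precisely the totally umbilical condition with mean curvature $-\nabla\nu$; the hypothesis $Z(\nu)=0$ ensures the would-be warping function depends only on the first factor.

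The main obstacle I anticipate is step (ii), specifically establishing that $\mathfrak{D}_\theta$ is integrable and that the $\mathfrak{D}_\theta$-component of $\nabla_Z W$ causes no defect — i.e. extracting the umbilicity in the precise form required by Hiepko's theorem. The difficulty is that \eqref{semi-Shapecond} only controls $A_{ntW}X + A_{nW}tX$ (a specific combination, on mixed arguments), so recovering full information about $\bar\nabla_Z W$ requires carefully routing the paracosymplectic identity $\bar\nabla\Phi = 0$ to convert derivatives along $\mathfrak{D}_\theta$ into shape-operator terms of the controlled type, and then checking that the $\sin^2\theta$ factor (which is nonvanishing by properness of the pointwise slant distribution) can be divided out cleanly. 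One must also verify that the leaves of $\mathfrak{D}_\theta$ inherit a well-defined slant function so that $M_\theta$ is genuinely a pointwise proper slant submanifold, and handle the placement of $\xi$ throughout (it sits in $\mathfrak{D}_T \oplus \{\xi\}$, and Lemma \ref{lemsub} kills all the $\xi$-derivative terms, which keeps the bookkeeping manageable).
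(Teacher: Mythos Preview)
Your proposal is correct and follows essentially the same route as the paper: necessity via Lemma~\ref{semiwplem1}, sufficiency via the foliation lemmas of Section~\ref{Pss} together with Hiepko's theorem \cite{SH}. Two minor discrepancies are worth flagging. First, for the necessity you should pair formula-$(a)$ with formula-$(b)$, not $(d)$: the term $g(A_{nW}tX,Z)=g(h(tX,Z),nW)$ is $(b)$, and you only get away with citing $(d)$ because $(b)$ and $(d)$ happen to have identical right-hand sides. Second, in the converse the paper takes the inner product of \eqref{semi-Shapecond} with $X\in\Gamma(\mathfrak{D}_T)$ and reads off the totally-geodesic criterion of Lemma~\ref{semi-lem3} directly (so Lemma~\ref{semi-lem2} is not invoked separately), then uses Lemma~\ref{semi-lem4} (involutivity of $\mathfrak{D}_\theta$), rather than Lemma~\ref{semi-lem5}, to rearrange the shape-operator terms before substituting \eqref{semi-Shapecond}; this yields $g(\nabla_W Z,X)=(\cot^2\theta-\csc^2\theta)X(\nu)g(Z,W)=-X(\nu)g(Z,W)$, exactly the umbilicity you anticipated.
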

\begin{proof}
Let $M$ be a non-trivial pointwise semi-slant warped product submanifold of $\bar{M}^{2m+1}$. Then clearly from formula-$(a)$ and formula-$(b)$ of  lemma \ref{semiwplem1}, we obtain Eq. \eqref{semi-Shapecond}. Since $f$ is a function on $M_{T}$, setting $\mu = \ln{f}$  implies that $Z(\mu)=0$.
Conversely, consider that $M$ is a pointwise semi-slant submanifold of $\bar{M}^{2m+1}$ such that Eq. \eqref{semi-Shapecond} satisfied. By taking inner product of Eq. \eqref{semi-Shapecond} with $X$ and from Lemma \ref{semi-lem3}, we conclude that the integral manifold $M_{T}$ of $\mathfrak{D}_{T}\oplus \{\xi\}$ defines a totally geodesic foliation in $M$. Then by Lemma \ref{semi-lem4}, the distribution $\mathfrak{D}_{\theta}$ is involutive if and only if
\begin{align*}
g(A_{nW}Z-A_{nZ}W, tX)=g(A_{ntZ}W-A_{ntW}Z, X), 
\end{align*}
for all $X \in \mathfrak{D}$ and $Z, W \in \mathfrak{D}_{\theta}$.
Above equation in view of equation  \eqref{shp2form} and fact that $h$, is symmetric can be rearranged as;
\begin{align}\label{semiwp-thm1_0}
g(A_{ntW}X+A_{nW}tX, Z)=g(A_{ntZ}X +g(A_{nZ}tX, W), 
\end{align}
for all $X \in \Gamma(\mathfrak{D}_{T})$ and $Z, W \in \Gamma(\mathfrak{D}_{\theta})$.
Employing formula-$(c)$ and formula-$(d)$ of Lemma \ref{semiwplem1} and Eqs. \eqref{gauss}, \eqref{shp2form} in right hand side of \eqref{semiwp-thm1_0}, we achieve that
\begin{align}\label{semiwp-thm1_1}
g(A_{ntZ}X+A_{nZ}tX, W)=\sin^{2}\theta g(\nabla_W{Z}, X).
\end{align}
Now, taking inner product of Eq. \eqref{semi-Shapecond} with $Z$, we find that
\begin{align}\label{semiwp-thm1_11}
g(A_{ntW}X+A_{nW}tX, Z)=(cos^2{\theta}-1)g(X (\nu)W, Z).
\end{align}
From Eqs. \eqref{semiwp-thm1_0}, \eqref{semiwp-thm1_1} and \eqref{semiwp-thm1_11}, we attain that
\begin{align}\label{semiwp-thm1_111}
g(\nabla_W{Z}, X)=(\cot^2{\theta}-\csc^{2}\theta)X(\nu)g(Z, W),
\end{align}
where, $\nu = \ln{f}$. Hence, from Eq.\eqref{semiwp-thm1_111}, we conclude tha the integrable manifold of $\mathfrak{D}_{\theta}$ is totally umbilical submanifold in $M$ and its mean curvature is non-zero and $Z(\nu)=0$ for all $Z \in \Gamma(\mathfrak{D}_{\theta})$. Thus, from \cite{SH}, we can say that $M$ is a locally non-trivial pointwise semi-slant warped product submanifold of $\bar{M}^{2m+1}$. This completes the proof of the theorem.  
\end{proof}

\end{document}